\documentclass[12pt]{amsart}
\usepackage{amsmath,amssymb,amsfonts,amsthm}
\usepackage{epsfig}
\usepackage{a4wide}
\usepackage{enumerate,color}
\usepackage{subfigure,hyperref}
\newtheorem{theorem}{Theorem}[section]
\newtheorem{lemma}[theorem]{Lemma}
\newtheorem{corollary}[theorem]{Corollary}
\newtheorem{prob}[theorem]{Problem}

\newtheorem{claim}[theorem]{Claim}
\newtheorem{conjecture}[theorem]{Conjecture}

\theoremstyle{definition}

\def\cD{\mathcal{D}}

\def\cF{\mathcal{F}}

\def\cS{\mathcal{S}}

\def\eps{\varepsilon}

\let\oldrceil\rceil
\renewcommand{\rceil}{\right\oldrceil}
\let\oldlceil\lceil
\renewcommand{\lceil}{\left\oldlceil}

\begin{document}

\title{Coloring Jordan regions and curves}

\author[W. Cames van Batenburg]{Wouter Cames van Batenburg} 
\address{Department of Mathematics, Radboud University Nijmegen, The Netherlands}
\email{w.camesvanbatenburg@math.ru.nl}

\author[L. Esperet]{Louis Esperet} 
\address{Univ. Grenoble Alpes, CNRS, G-SCOP, Grenoble, France}
\email{louis.esperet@grenoble-inp.fr}

\author[T. M\"uller]{Tobias M\"uller} 
\address{Utrecht University, Utrecht, The Netherlands}
\email{t.muller@uu.nl}
\thanks{Louis Esperet and Tobias M\"uller were partially supported by ANR Project Stint
  (\textsc{anr-13-bs02-0007}) and LabEx PERSYVAL-Lab
  (\textsc{anr-11-labx-0025-01}). Wouter Cames van Batenburg was
  supported by NWO (\textsc{613.001.217}) and a bilateral project PHC
  Van Gogh 2016 (35513NM). Tobias M\"uller's research was
partially supported by an NWO VIDI grant}

\date{}

\sloppy

\begin{abstract}
A Jordan region is a subset of the plane that is homeomorphic to a
closed disk. Consider a family $\cF$ of Jordan regions whose interiors are
pairwise disjoint, and such that any two Jordan regions intersect in at
most one point. If any point of the plane is contained in at most
$k$
elements of $\cF$ (with $k$ sufficiently large), then we show that the elements of $\cF$ can be
colored with at most $k+1$ colors so that intersecting Jordan regions
are assigned distinct colors. This is best possible and answers a question raised by Reed and Shepherd in
1996. As a simple corollary, we also obtain a positive answer to a
problem of Hlin\v en\'y (1998) on the chromatic
number of contact systems of strings. 

We also investigate the
chromatic number of families of touching Jordan curves. This can be used to bound the ratio
between the maximum number of vertex-disjoint
directed cycles in a planar digraph, and its fractional counterpart.
\end{abstract}

\maketitle

\section{Introduction}

In this paper, a \emph{Jordan region} is a subset of the plane that is homeomorphic to a
closed disk. 
A family $\cF$ of Jordan regions is \emph{touching} if their interiors are pairwise disjoint. If any point of the
plane is contained in at most $k$ Jordan regions of $\cF$, then we say
that $\cF$ is \emph{$k$-touching}. If any two elements of
$\cF$ intersect in at most one point, then $\cF$ is said to be
\emph{simple}. All the families of Jordan regions and curves we consider in this
paper are assumed to have a finite number of intersection points. The first part of this paper is concerned with the \emph{chromatic number} of simple
$k$-touching families of Jordan regions, i.e. the minimum number of
colors needed to color the Jordan regions, so that intersecting
Jordan regions receive different colors. This can also be defined as
the chromatic number of the \emph{intersection graph} $G(\cF)$ of $\cF$, which is the graph with vertex set $\cF$ in which two vertices
are adjacent if and only if the corresponding elements of $\cF$
intersect. Recall that the \emph{chromatic number} of a graph $G$,
denoted by $\chi(G)$, is the least number of colors needed to color
the vertices of $G$, so that adjacent vertices receive different
colors. The chromatic number of a graph $G$ is at least the
\emph{clique number} of $G$, denoted by $\omega(G)$, which is the
maximum number of pairwise adjacent vertices in $G$, but the
difference between the two parameters can be arbitrarily large
(see~\cite{Kos04} for a survey on the chromatic and clique numbers of geometric
intersection graphs).

The following question was raised by Reed and Shepherd~\cite{RS96}.

\begin{prob}{\cite{RS96}}\label{prob:1}
Is there a constant $C$ such that for any simple touching family $\cF$
of Jordan regions, $\chi(G(\cF))\le \omega(G(\cF))+C$? Can we take $C=1$?
\end{prob}

Our main result is the following (we made no real effort to optimize the
constant 490, which is certainly far from optimal, our main concern
was to give a proof that is as simple as possible).

\begin{theorem}\label{th:k1}
For $k\ge 490$, any simple $k$-touching family of Jordan regions is $(k+1)$-colorable.
\end{theorem}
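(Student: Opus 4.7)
The plan is to proceed by induction on $n = |\cF|$. I would attempt to isolate a region $R \in \cF$ that is ``reducible'' in the sense that one can color $\cF \setminus \{R\}$ inductively with $k+1$ colors and then assign a color to $R$. The cleanest sufficient condition is $\deg_{G(\cF)}(R) \leq k$, which would make $G(\cF)$ a $k$-degenerate graph and hence $(k+1)$-colorable by greedy coloring; a weaker but also useful condition is that all neighbors of $R$ in $G(\cF)$ share a common meeting point $p$ with $R$, since then those neighbors form a clique $K_{m_p - 1}$ of size at most $k-1$ and use at most $k-1$ colors in any valid coloring, freeing two colors for $R$ regardless of its degree.

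To locate a reducible region, I would study the planar arrangement graph $H$ of $\cF$: its vertices are the meeting points of $\cF$, and its edges are the maximal boundary arcs of regions between consecutive meeting points. Simplicity of $\cF$ ensures $H$ is essentially planar (up to loops or multi-edges arising from regions with very few meeting points on their boundary, which are reducible by hand). Euler's formula applied to $H$, combined with lower bounds on face lengths (a region face has length $d(R)$, the number of meeting points on the region's boundary, while an exterior face has length at least $3$ once pendant regions are removed), yields $\sum_p m_p = |E(H)| = O(n)$, and hence $2|E(G(\cF))| = \sum_p m_p(m_p - 1) \leq (k-1) |E(H)| = O(k n)$.

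The main obstacle is that this averaging only produces an average degree in $G(\cF)$ of roughly $6(k-1)$, far exceeding $k$, so $k$-degeneracy does not follow from a naive global count. To bridge the gap, I would deploy a discharging argument on $H$: assign each vertex $v$ the initial charge $2 m_v - 4$ and each face $f$ the initial charge $\ell(f) - 4$, so that the total charge equals $-8$ by Euler's formula. The redistribution rules would move charge away from high-multiplicity meeting points, which are responsible for large cliques in $G(\cF)$, toward neighboring regions of small $d(R)$, until some region is forced to be either of $G(\cF)$-degree at most $k$ or to have its entire neighborhood concentrated at a single meeting point, both of which are reducible in the sense above. The threshold $k \geq 490$ in the statement presumably emerges from the tightest balance between the positive and negative contributions of the discharging inequalities, rather than from a fundamental topological obstruction.
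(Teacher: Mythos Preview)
Your overall architecture---induction via a reducible configuration, located by discharging on a planar auxiliary graph built from the contact points---is exactly the paper's architecture. The paper uses the bipartite incidence graph $G$ (one vertex per region, one per contact point) rather than your arrangement graph $H$, but these are closely related and either can carry the discharging.

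The genuine gap is in your list of reducible configurations. Both conditions you name---$\deg_{G(\cF)}(R)\le k$, and ``all neighbors of $R$ meet $R$ at a single point''---amount to the same thing: the second forces $\deg_{G(\cF)}(R)\le k-1$, since at most $k$ regions (including $R$) pass through any point. So your plan is, in effect, to prove that $G(\cF)$ is $k$-degenerate. The paper explicitly observes (Section~\ref{sec:ccl}, Figure~\ref{fig:hands}) that this is \emph{false}: there exist simple $k$-touching families in which every region intersects exactly $k+1$ others. Hence no discharging scheme, however clever, can always produce a region of degree $\le k$; your set of reducible configurations is provably too small.

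What the paper does to escape this is introduce one further reducible configuration that is not degree-based. When three consecutive ``bad'' neighbors occur around a big contact vertex, four specific regions are deleted simultaneously; each has a list of $2$ or $3$ available colors, and the constraints among them form $K_4$ minus an edge. The Erd\H{o}s--Rubin--Taylor theorem on degree-choosability then guarantees an extension. This single list-coloring reduction is what makes the discharging close, and the constant $490$ arises from the interaction between the big-vertex threshold $b$, the parameter $\eps$ governing Rule~(R1), and the $\tfrac{2d}{3}\eps$ bound coming from that reduction. Without an analogous multi-vertex reducible configuration, your discharging cannot balance.
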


Note that apart from the constant 490, Theorem~\ref{th:k1} is best
possible. Figure~\ref{fig:clique} depicts two examples of simple
$k$-touching families of Jordan regions of chromatic number $k+1$. 

\begin{figure}[htbp]
\begin{center}
\includegraphics[scale=1.2]{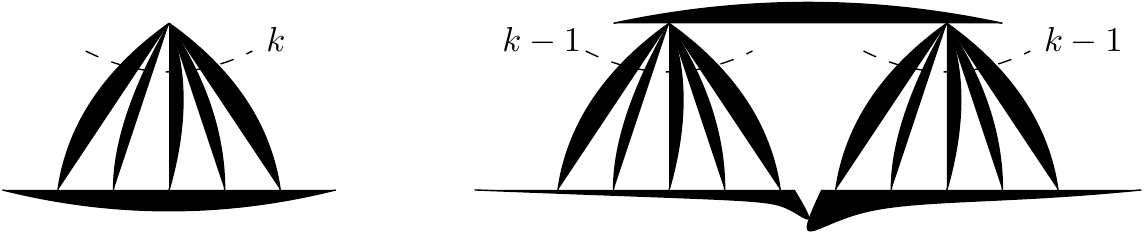}
\caption{Two simple
$k$-touching families of Jordan regions with chromatic number $k+1$. \label{fig:clique}}
\end{center}
\end{figure}

It was proved in~\cite{EGL16} that every simple $k$-touching family of
Jordan regions is $3k$-colorable (their result is actually stated for
$k$-touching families of strings, but it easily implies the
result on Jordan regions). We obtain the next result as a simple consequence. 

\begin{corollary}\label{cor:1}
Any simple $k$-touching family of Jordan regions is $(k+327)$-colorable.
\end{corollary}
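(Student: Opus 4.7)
The plan is to combine Theorem~\ref{th:k1} with the $3k$-coloring bound from~\cite{EGL16} via a case split on $k$. Individually, neither bound alone yields $k+327$: Theorem~\ref{th:k1} only applies when $k \ge 490$, while $3k \le k+327$ forces $k \le 163$. The key observation bridging the gap is that a $k$-touching family is automatically $k'$-touching for every $k' \ge k$, so for intermediate values of $k$ I can simply apply Theorem~\ref{th:k1} with the inflated parameter $k' = 490$.

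Concretely, I would split into two cases. If $k \le 163$, the bound from~\cite{EGL16} gives a coloring with at most $3k \le 489 \le k+327$ colors, so we are done. If $k \ge 164$, I would apply Theorem~\ref{th:k1} to $\cF$ viewed as a $\max(k,490)$-touching family, producing a coloring with at most $\max(k,490)+1$ colors. When $k \ge 490$, this is $k+1 \le k+327$; when $164 \le k \le 489$, this is $491$, and $491 \le k+327$ because $k \ge 164$. Either way, the number of colors used is at most $k+327$.

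There is no real obstacle; the only work is to pick the case split so that the two regimes jointly cover all $k$ under the uniform bound $k+327$. The constant $327 = 490 - 163$ is exactly what is needed for the two sub-bounds to meet at $k=163, 164$ without leaving a gap. If one wanted to push the constant $327$ down further, the bottleneck would be the $3k$ dependency of the result of~\cite{EGL16} in the small-$k$ regime, not Theorem~\ref{th:k1}.
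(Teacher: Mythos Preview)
Your proof is essentially identical to the paper's: the same case split using the $3k$ bound from~\cite{EGL16} for $k \le 163$ and Theorem~\ref{th:k1} (applied at the inflated parameter $490$ when necessary) for $k \ge 164$. One minor slip: the chain $3k \le 489 \le k+327$ fails for small $k$ (e.g., $k=1$ gives $k+327 = 328 < 489$), but the direct inequality $3k \le k+327$ holds for all $k \le 163$, so the conclusion is unaffected.
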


\begin{proof}
Let $\cF$ be a simple $k$-touching family $\cF$ of Jordan regions. If
$k\le 163$ then $\cF$ can be colored with at most $3k\le k+327$ colors
by the result of~\cite{EGL16} mentioned above. If $164\le k \le 489$, then $\cF$ is also
$490$-touching, and it follows from Theorem~\ref{th:k1} that $\cF$ can
be colored with at most $491\le k+327$ colors.
Finally, if $k\ge 490$, Theorem~\ref{th:k1} implies that $\cF$ can
be colored with at most $k+1\le k+327$ colors.
\end{proof}

Observe that for a given simple touching family $\cF$ of Jordan regions,
if we denote by $k$ the least integer so that $\cF$ is $k$-touching,
then $\omega(G(\cF))\ge k$, since $k$ Jordan regions intersecting some
point $p$ 
of the plane are pairwise intersecting. Therefore, we obtain the
following immediate corollary, which is a positive
answer to the problem raised by Reed and Shepherd.

\begin{corollary}\label{cor:2}
For any simple touching family $\cF$ of Jordan regions, $\chi(G(\cF))\le
\omega(G(\cF))+327$ (and $\chi(G(\cF))\le
\omega(G(\cF))+1$ if $\omega(G(\cF))\ge 490$).
\end{corollary}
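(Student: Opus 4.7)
The plan is very short, since this corollary is meant to be a direct bookkeeping consequence of Corollary~\ref{cor:1} and Theorem~\ref{th:k1}. Let $\cF$ be a simple touching family of Jordan regions and let $k$ be the least integer such that $\cF$ is $k$-touching. I would first re-invoke the observation preceding the statement: by minimality of $k$, some point $p$ of the plane is contained in exactly $k$ members of $\cF$, and those $k$ Jordan regions pairwise intersect at $p$, so they form a $k$-clique in $G(\cF)$. Hence $\omega(G(\cF))\ge k$.

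For the first inequality, I would apply Corollary~\ref{cor:1} directly: since $\cF$ is $k$-touching, $\chi(G(\cF))\le k+327\le \omega(G(\cF))+327$.

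For the stronger bound under the assumption $\omega(G(\cF))\ge 490$, the only small subtlety is that $\omega(G(\cF))\ge 490$ does not a priori give $k\ge 490$; we only know $k\le \omega(G(\cF))$. The remedy is to artificially inflate $k$: set $k':=\omega(G(\cF))$. Since any $k$-touching family is also $k'$-touching for $k'\ge k$, the family $\cF$ is $k'$-touching, and $k'\ge 490$, so Theorem~\ref{th:k1} applies and yields $\chi(G(\cF))\le k'+1=\omega(G(\cF))+1$.

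There is no real obstacle here, as the work has already been done in Theorem~\ref{th:k1} and Corollary~\ref{cor:1}; the only thing to watch is the asymmetry that $k$ bounds $\omega$ from below (not above), which is handled by the trivial monotonicity of the $k$-touching property.
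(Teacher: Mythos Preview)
Your proposal is correct and follows essentially the same approach as the paper, which treats the corollary as immediate from the observation $\omega(G(\cF))\ge k$ together with Corollary~\ref{cor:1} and Theorem~\ref{th:k1}. You are more explicit than the paper in handling the second part via the monotonicity $k$-touching $\Rightarrow$ $k'$-touching for $k'\ge k$, but this is exactly the intended (trivial) step.
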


Note that the bound $\chi(G(\cF))\le
\omega(G(\cF))+1$ is also best possible (as shown by
Figure~\ref{fig:clique}, right).

\smallskip

It turns out that our main result also implies a positive answer to a
question raised by Hlin\v en\'y in 1998~\cite{Hli98}. A \emph{string}
is the image of some continuous injective function from $[0,1]$ to
$\mathbb{R}^2$, and the \emph{interior} of a string is the string
minus its two endpoints. A \emph{contact systems of strings} is a set of strings such
that the interiors of any two strings have empty intersection. In other
words, if $c$ is a contact point in the interior of a string $s$, all
the strings containing $c$ distinct from $s$ end at
$c$. A contact system of strings is said to be \emph{one-sided} if for
any contact point $c$ as above, all the strings ending at $c$ leave from the same side of
$s$ (see Figure~\ref{fig:hli}, left). Hlin\v en\'y~\cite{Hli98} raised the following problem:

\begin{prob}{\cite{Hli98}}\label{prob:2}
Let $\cS$ be a one-sided contact system of strings, such that any
point of the plane is in at most $k$ strings, and any two strings
intersect in at most one point. Is it true that $G(\cS)$ has chromatic
number at most $k+o(k)$? (or even $k+c$, for some constant $c$?)
\end{prob}

\begin{figure}[htbp]
\begin{center}
\includegraphics[scale=1]{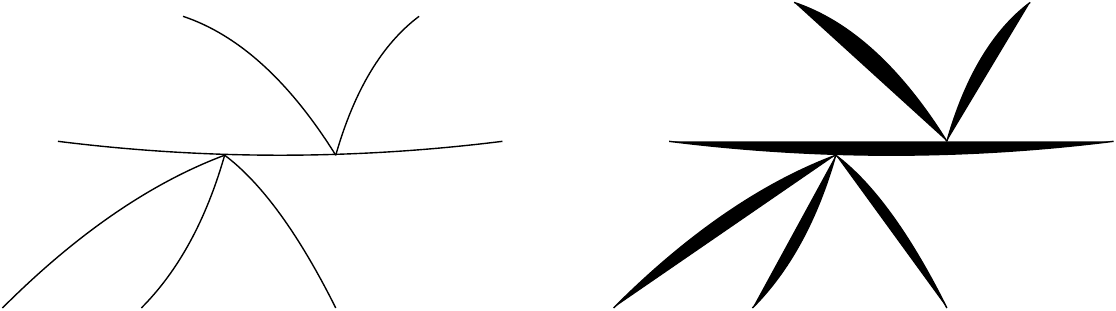}
\caption{Turning a one-sided contact system of strings into a simple
  touching family of Jordan regions. \label{fig:hli}}
\end{center}
\end{figure}

The following simple corollary of Theorem~\ref{th:k1} gives a positive answer to
Problem~\ref{prob:2}.

\begin{corollary}\label{cor:3}
Let $\cS$ be a one-sided contact system of strings, such that any
point of the plane is in at most $k$ strings, and any two strings
intersect in at most one point. Then $G(\cS)$ has chromatic number at most $k+127$ (and at most $k+1$ if $k\ge 490$).
\end{corollary}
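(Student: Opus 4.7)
The plan is to reduce Corollary~\ref{cor:3} to Theorem~\ref{th:k1} (and Corollary~\ref{cor:1}) by converting the one-sided contact system of strings $\cS$ into a simple $k$-touching family of Jordan regions $\cF$ that has the same intersection graph and the same touching parameter. The picture in Figure~\ref{fig:hli} suggests exactly this: ``fatten'' each string into a thin Jordan region, using the one-sidedness to choose the side of the thickening consistently at every contact point.

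Concretely, I would pick a small $\eps>0$ (smaller than half the minimum distance between any two non-intersecting arcs of the strings) and, for each string $s\in\cS$, define $D_s$ by thickening $s$ as follows. At any contact point $c$ lying in the interior of $s$, the strings $s_{i_1},\dots,s_{i_m}$ that end at $c$ all approach from the same side of $s$, by one-sidedness; I thicken $s$ by an $\eps$-sliver on the opposite side, so $s$ itself lies on the boundary of $D_s$ near $c$. Each $s_{i_j}$ is thickened symmetrically but its tip at $c$ is kept as a boundary point of $D_{s_{i_j}}$, with thin disjoint wedges separating consecutive strings ending at $c$. At endpoints of a string that are free (not on any other string) I simply round off with a small cap. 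Making $\eps$ small enough ensures none of the local modifications interact with each other.

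Next I have to check four properties of the resulting family $\cF=\{D_s:s\in\cS\}$: (a) each $D_s$ is a Jordan region, which follows from the construction since it is an $\eps$-neighborhood of an arc modified only locally near finitely many contact points; (b) the interiors of the $D_s$'s are pairwise disjoint, because the thickenings are chosen on sides/wedges that do not meet, so $\cF$ is touching; (c) $\cF$ is simple, i.e.\ any two $D_s,D_{s'}$ intersect in at most one point, because the corresponding strings do and the thickening produces exactly one contact point on the boundary near each intersection; (d) $\cF$ is $k$-touching, since a point in the interior of some $D_s$ lies in no other region (by (b)), while a point on the boundary of several regions must be one of the original contact points $c$, where the number of regions meeting equals the number of strings meeting $c$ in $\cS$, hence at most $k$. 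Moreover the adjacency structure is preserved, so $G(\cF)=G(\cS)$.

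Once $\cF$ has these properties, applying Theorem~\ref{th:k1} gives $\chi(G(\cS))=\chi(G(\cF))\le k+1$ when $k\ge 490$, and applying Corollary~\ref{cor:1} (possibly sharpened for the specific family coming from strings) handles the small-$k$ case to yield $\chi(G(\cS))\le k+127$. The main obstacle is the careful local analysis at contact points: one must verify that the wedge construction really produces a simply connected region for every string, creates no spurious intersections between thickenings of non-intersecting strings, and---most importantly---does not inflate the touching parameter. The one-sidedness hypothesis is essential here, because it is precisely what lets us choose a consistent side for the sliver at every interior contact point; without it, a string with contacts on both sides could not be thickened without either crossing another thickening or losing the Jordan-region property.
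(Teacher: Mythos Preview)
Your reduction by thickening strings into Jordan regions is exactly the paper's approach, and your treatment of the case $k\ge 490$ via Theorem~\ref{th:k1} matches the paper.

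The gap is in the small-$k$ case. Corollary~\ref{cor:1} only yields $\chi(G(\cF))\le k+327$, not $k+127$, and your parenthetical ``possibly sharpened for the specific family coming from strings'' does not name a concrete sharpening. The paper does \emph{not} go through the Jordan-region reduction for small $k$: instead it invokes a separate bound from~\cite{EGL16}, namely $\chi(G(\cS))\le \lceil \tfrac{4}{3}k\rceil+6$, which holds directly for one-sided contact systems of strings (and is strictly better than the $3k$ bound that~\cite{EGL16} gives for Jordan regions). This covers $k\le 363$. For the intermediate range $364\le k\le 489$ the paper does use the thickening, then observes that a $k$-touching family is in particular $490$-touching and applies Theorem~\ref{th:k1} to get $\chi\le 491\le k+127$. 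So to complete your argument you need to (i) cite the $\lceil \tfrac{4}{3}k\rceil+6$ bound for strings rather than Corollary~\ref{cor:1}, and (ii) handle the range $364\le k\le 489$ by bumping the touching parameter up to $490$.
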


\begin{proof}
Assume first that $k\le 363$. It was proved in~\cite{EGL16} that $G(\cS)$ has chromatic number at most $\lceil \frac{4}{3} k \rceil + 6$, so in this case at most
$k+127$, as desired. Assume now that $k\ge 364$.
Let $\cF$ be obtained from $\cS$ by thickening each string $s$ of $\cS$, turning $s$ into a (very
thin) Jordan region (see Figure~\ref{fig:hli}, from left to right). Since $\cS$ is one-sided, each intersection point
contains precisely the same elements in $\cS$ and $\cF$, and therefore
$G(\cS)$ and $G(\cF)$ are equal, while $\cF$ is a simple $k$-touching
family of Jordan regions. If $364\le k\le 489$, then $\cF$ is also
$490$-touching and it follows from  Theorem \ref{th:k1} that $G(\cS)=G(\cF)$ has
chromatic number at most $491\le k+127$. Finally, if $k\ge 490$, then by Theorem \ref{th:k1}, $G(\cS)=G(\cF)$ has
chromatic number at most $k+1$, as desired.
\end{proof}

A \emph{Jordan curve} is the boundary of some Jordan region of the
plane. We say that a family of Jordan curves is \emph{touching} if
for any two Jordan curves $a,b$, the curves $a$ and $b$ do not cross
(equivalently, either the interiors of the regions
bounded by $a$ and $b$ are disjoint, or one is contained in the
other). Moreover, if any point of the plane is on at most $k$
Jordan curves, we say that the family is $k$-touching. Note that
unlike above, the
families of Jordan curves we consider here are not required to be
simple (two Jordan curves may intersect in several points). Note that previous
works on intersection of Jordan curves have usually considered the
opposite case, where every two curves that intersect also cross (see
for instance~\cite{KM13} and the references therein).

Let $\cF$ be a $k$-touching family of Jordan curves.
For any two
intersecting Jordan curves $a,b\in \cF$, let $\cD(a,b)$ be the set of
Jordan curves $c$ distinct
from $a,b$ such that the (closed) region bounded by $c$ contains
exactly one of $a,b$.
The cardinality of
$\cD(a,b)$ is called \emph{the distance between} $a$ and
$b$, and is denoted by $d(a,b)$. Note that since $\cF$ is $k$-touching, any two
intersecting Jordan curves are at distance at most $k-2$. 

Given a $k$-touching family $\cF$, the \emph{average distance in}
$\cF$ is the average of $d(a,b)$, over all pairs of intersecting Jordan curves
$a,b\in \cF$. We conjecture the following.

\begin{conjecture}\label{conj:ad}
For any $k$-touching family $\cF$ of
Jordan curves, the average distance in $\cF$ is at most $\tfrac{k}2$.
\end{conjecture}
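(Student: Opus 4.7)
The plan is to prove the conjecture via a local counting argument at each point of the plane. The starting observation is: for an intersecting pair $\{a,b\}$ in $\cF$ and any $c \in \cD(a,b)$, the curve $c$ must pass through every intersection point of $a$ and $b$. Indeed, if $R$ denotes the closed region bounded by $c$ and $a \subseteq R$ while $b \subseteq \overline{\bbR^2 \setminus R}$, then $a \cap b \subseteq R \cap \overline{\bbR^2 \setminus R} = c$. Consequently, for any meeting point $p$ of $a,b$,
\[
d(a,b) \;\le\; d_p(a,b) \;:=\; |\{c \in \cF \setminus \{a,b\} : p \in c \text{ and $c$ encloses exactly one of } a,b\}|.
\]

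The next step is a local analysis at a fixed point $p$ through which $n_p \le k$ curves pass; denote this set of curves by $\cF_p$. Since these curves pairwise do not cross at $p$, their nesting forms a forest structure: for any two $x,y \in \cF_p$, either one's bounded region contains the other's (near $p$, hence globally), or the regions are disjoint near $p$. A short case analysis on the forest structures on three elements shows that \emph{among any three curves through $p$, at most one separates the other two}. Writing $I_c^p$ and $O_c^p$ for the partition of $\cF_p \setminus \{c\}$ into curves inside, resp.\ outside, the region bounded by $c$, this yields
\[
\sum_{\{a,b\} \subseteq \cF_p} d_p(a,b) \;=\; \sum_{c \in \cF_p} |I_c^p| \cdot |O_c^p| \;\le\; \binom{n_p}{3},
\]
so the average of $d_p$ over the $\binom{n_p}{2}$ pairs at $p$ is at most $(n_p - 2)/3 \le (k-2)/3$.

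To combine the local bounds globally, I would assign each intersecting pair $\{a,b\}$ to one of its meeting points $p(a,b)$, and sum $d(\{a,b\}) \le d_{p(a,b)}(a,b)$ to obtain $\sum_{\{a,b\}} d(\{a,b\}) \le \sum_p \binom{n_p}{3}$. Write $E$ for the number of intersecting pairs in $\cF$. If every intersecting pair met at a \emph{unique} point, then $E = \sum_p \binom{n_p}{2}$, and the average distance would be at most $\sum_p \binom{n_p}{3}/\sum_p \binom{n_p}{2} \le (k-2)/3 \le k/2$, even proving the conjecture with a stronger constant.

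The hard part is handling intersecting pairs $\{a,b\}$ with several meeting points: then $E < \sum_p \binom{n_p}{2}$ and the naive ratio bound breaks. Summing $d(a,b) \le d_{p_i}(a,b)$ over all $m(a,b)$ meeting points of $\{a,b\}$ only gives the meeting-weighted bound $\sum_{\{a,b\}} m(a,b)\, d(a,b) \le \sum_p \binom{n_p}{3}$, which combined with $\sum m(a,b) = \sum_p \binom{n_p}{2}$ does not by itself control the plain average. Closing this gap will likely require showing, via planarity or Euler's formula, that pairs with $m(a,b) \ge 2$ are few or contribute little: after all, the $d(a,b) + 2$ curves in $\{a,b\} \cup \cD(a,b)$ must pass concurrently through every one of the $m$ meeting points, a severe topological restriction. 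This topological control of multi-meeting pairs is the main obstacle I anticipate.
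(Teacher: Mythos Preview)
The statement you are addressing is a \emph{conjecture} in the paper, not a theorem: the paper does not prove it. What the paper does prove is the weaker bound $\alpha k \le k/(1+\tfrac1{16e})$ (Theorem~\ref{th:avd2}), by a completely different method. There, each edge $ab$ distributes unit charges to the ``middle'' portion of $\cD(a,b)$; the total charge is bounded below by $\epsilon\sum_{ab}d(a,b)$, while the charge received by any single curve $c$ is bounded above via Lemma~\ref{lem:fpb} (a Fox--Pach-type bound on $c$-crossing pairs). Combining these yields a self-referential inequality for $\alpha$. No local pointwise analysis of the laminar structure at an intersection point is used.

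Your local computation is correct and is a nice observation: among three pairwise non-crossing Jordan curves through a common point, at most one separates the other two, so $\sum_{\{a,b\}\subseteq\cF_p} d_p(a,b)\le \binom{n_p}{3}$ and the local average is at most $(n_p-2)/3\le (k-2)/3$, stronger than the conjectured $k/2$. The gap you flag, however, is genuine and is exactly where the difficulty lies. The passage from the incidence-weighted average (over pairs counted with multiplicity $m(a,b)$) to the plain average over intersecting pairs fails in general, and nothing in your outline controls it. Your proposed remedy---showing via planarity that pairs with $m(a,b)\ge 2$ are sparse or have small $d(a,b)$---is not carried out, and there is no indication in the paper (or elsewhere) that such a statement is available; indeed, $k$ nested curves can all share two common points, so high-$d$ pairs with $m\ge 2$ are not rare in any obvious sense. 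As it stands, your argument proves the conjecture only under the additional hypothesis that any two curves meet in at most one point (the ``simple'' case), which is already interesting but is not the full statement.
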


It was proved by Fox and Pach~\cite{FP10} that each $k$-touching
family of strings is $(6ek+1)$-colorable, which directly implies
that each $k$-touching family of Jordan curves is
$(6ek+1)$-colorable (note that
$6e\approx 16.31$). We show how to improve this bound when the average
distance is at most $\alpha k$, for some $\alpha\le 1$.\\

\begin{theorem}~\label{th:ad}
Let $\cF$ be a $k$-touching family of Jordan curves, such that the
average distance in $\cF$ is at most $\alpha k$, for some constant
$0\le\alpha \le 1$. Then the chromatic
number of $\cF$ is at most
$\tfrac{6e^\delta}{\delta+\delta^2(1-\alpha)}\,k$, where
$\delta=\delta(\alpha)=\tfrac1{2-2\alpha}(1-2\alpha+\sqrt{4
  \alpha^2-8\alpha+5})$ for $\alpha<1$ and $\delta(1)=1$.
\end{theorem}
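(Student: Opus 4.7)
My plan is to prove, for every parameter $t>0$, the uniform inequality
\[
\chi(G(\cF))\;\le\;\frac{6e^t}{t\bigl(1+t(1-\alpha)\bigr)}\,k,
\]
and then to select $t=\delta(\alpha)$ so that the right-hand side is minimised. The base ingredient is the Fox--Pach theorem recalled before the statement, which gives $\chi(G(\cF))\le 6ek+1$ for every $k$-touching family (the estimate transfers from strings to Jordan curves, for instance by cutting each curve at a regular point). The new ingredient is the double-counting identity
\[
\sum_{(a,b)\in E}d(a,b)\;=\;\sum_{c\in\cF}\ \bigl|\{(a,b)\in E:\,c\in\cD(a,b)\}\bigr|,
\]
which converts the global hypothesis $\tfrac{1}{m}\sum_{(a,b)\in E}d(a,b)\le \alpha k$ into a local statement: a typical curve of $\cF$ separates many intersecting pairs.

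For the colouring itself, I would carry out a randomised partition of $\cF$ into $L=\lceil k/t\rceil$ classes by giving each curve an independent uniform label in $\{1,\dots,L\}$. Inside a single class, the expected number of curves through any given point is $k/L\approx t$, so Fox--Pach contributes a baseline of $6et$ colours per class and $6ek$ colours in total. The improvement comes from the fact that, for a fixed intersecting pair $(a,b)\in E$, the probability that \emph{some} member of $\cD(a,b)$ lands in the same class as $a$ and $b$---thereby ``separating'' them inside that class and reducing the effective Fox--Pach workload---grows like $1-(1-1/L)^{d(a,b)}$. Combining this with the identity displayed above and the average-distance bound, the expected per-class chromatic cost shrinks from $6et$ to $6e^t/(1+t(1-\alpha))$, and summing over the $L$ classes yields the stated uniform bound. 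Concretely, I would carry out the accounting via a doubly randomised thinning (one partition for the outer layer, a Fox--Pach-style Poissonised sub-sample for the inner), reading off the $(1+t(1-\alpha))$-saving from the expected number of separating triples $(a,b,c)$ with $c\in\cD(a,b)$ that survive.

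For the second step, write $g(t)=e^t/[t(1+t(1-\alpha))]$ and solve $g'(t)=0$. The derivative vanishes exactly when $(1-\alpha)t^2+(2\alpha-1)t-1=0$, whose positive root is $\delta(\alpha)=\tfrac{1-2\alpha+\sqrt{4\alpha^2-8\alpha+5}}{2-2\alpha}$; the degenerate case $\alpha=1$ is handled by passing to the limit, giving $\delta(1)=1$ and recovering the Fox--Pach bound $6ek$. Substituting $t=\delta$ turns the denominator $t(1+t(1-\alpha))$ into $\delta+\delta^2(1-\alpha)$ and delivers the announced inequality.

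The principal obstacle is the local-to-global step, i.e.\ threading the average-distance hypothesis through Fox--Pach's own probabilistic argument rather than simply composing the two estimates. I expect that nesting plays a crucial role here: since no two Jordan curves in $\cF$ cross, the curves form a forest under containment, and $d(a,b)$ is essentially the $a$--$b$ path length in this forest. Exploiting this tree structure should make the separation events across different random samplings genuinely independent, which is exactly what is needed to extract the factor $1+t(1-\alpha)$ rather than a weaker constant.
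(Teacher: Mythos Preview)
Your optimisation step is correct: the choice $t=\delta(\alpha)$ does minimise $e^t/[t(1+t(1-\alpha))]$, and the paper uses exactly this calculation. The architecture of your colouring argument, however, diverges from the paper and contains a genuine gap.

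The paper does not partition $\cF$ into classes and apply Fox--Pach per class. Instead it proves an \emph{edge bound}: sample each curve independently with probability $p=\delta/k$, and for each edge $ab$ call the pair \emph{good} if $a,b$ are sampled and at most one further curve through the chosen point $x(a,b)$ is sampled, \emph{provided that third curve is not in $\cD(a,b)$}. When this holds, the local picture at $x(a,b)$ can be perturbed into a $2$-touching (hence planar) configuration while keeping the $a$--$b$ contact; comparing the expected number of good edges with three times the expected number of sampled curves gives
\[
m\;<\;\frac{3e^\delta}{\delta+\delta^2(1-\alpha-2/k)}\,kn,
\]
and a separate ``blow-up'' lemma converts this asymptotic edge bound into the chromatic bound $\chi\le \tfrac{6e^\delta}{\delta+\delta^2(1-\alpha)}\,k$.

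Your proposal misidentifies the mechanism by which the distance hypothesis enters. You write that when some $c\in\cD(a,b)$ lands in the same class as $a,b$ it ``separates'' them and \emph{reduces} the workload. In fact the presence of a separating $c$ is precisely the \emph{obstruction}: if $c$ lies between $a$ and $b$, then locally at $x(a,b)$ the three curves are nested, and one cannot untangle them into a $2$-touching configuration that still records the $a$--$b$ contact. The gain $1+\delta(1-\alpha)$ comes from the fact that a \emph{non}-separating third curve (one of the $\ell-d(a,b)-2$ curves through $x(a,b)$ not in $\cD(a,b)$) can be tolerated, and low average distance means there are many such non-separators. So the sign of the effect is the opposite of what you describe, and your partition scheme does not supply the factor: even granting perfect concentration so that each class is $t$-touching, Fox--Pach gives $6et$ colours per class and $L\cdot 6et\approx 6ek$ in total, with no visible place for $\alpha$ to enter, since a curve $c\in\cD(a,b)$ sharing the class of $a,b$ does not delete the edge $ab$ from that class's intersection graph. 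The ``doubly randomised thinning'' and the forest heuristic are not enough to bridge this; what is missing is the planar-reduction step with the explicit condition~(3) on the third sampled curve.
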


Note that $\delta(1)=1=\lim_{\alpha\rightarrow 1}\tfrac1{2-2\alpha}(1-2\alpha+\sqrt{4
  \alpha^2-8\alpha+5})$.
Theorem~\ref{th:ad} has the following direct corollary.

\begin{corollary}\label{cor:ad}
Let $\cF$ be a $k$-touching family of Jordan curves, such that the
average distance in $\cF$ is at most $\alpha k$. Then $\cF$ is
$\beta\,k$-colorable, where 

$$
\beta= \left\{
    \begin{array}{lll}
        12.76 & \mbox{if} & \alpha\le
3/4, \\
        10.22 & \mbox{if} & \alpha\le
1/2, \\
8.43 & \mbox{if} & \alpha\le
1/4.
    \end{array}
\right.
$$
\end{corollary}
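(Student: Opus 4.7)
The plan is to apply Theorem~\ref{th:ad} directly at the three threshold values $\alpha\in\{1/4,\,1/2,\,3/4\}$ and verify the numerical bounds by straightforward evaluation. The key observation is that the hypothesis of Theorem~\ref{th:ad} is monotone in a trivial sense: if the average distance in $\cF$ is at most $\alpha_0 k$ for some $\alpha_0\le\alpha$, then it is also at most $\alpha k$, so Theorem~\ref{th:ad} may be invoked with the worst-case value $\alpha$ in each regime. Thus no monotonicity of the function $\beta(\alpha)=\tfrac{6e^{\delta(\alpha)}}{\delta(\alpha)+\delta(\alpha)^2(1-\alpha)}$ needs to be established; one simply plugs in $\alpha\in\{1/4,\,1/2,\,3/4\}$ to obtain the three stated bounds.

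Next I would carry out the three symbolic simplifications so that the numerical rounding is transparent. For $\alpha=3/4$ one has $1-2\alpha=-1/2$ and $4\alpha^2-8\alpha+5=5/4$, giving $\delta(3/4)=\sqrt{5}-1$; then $\delta^2=6-2\sqrt{5}$ and $\delta+\tfrac14\delta^2=\tfrac12(\sqrt{5}+1)$, so $\beta(3/4)=\tfrac{12\,e^{\sqrt{5}-1}}{\sqrt{5}+1}$, which numerically is at most $12.76$. For $\alpha=1/2$, $\delta(1/2)=\sqrt{2}$ and $\delta+\tfrac12\delta^2=\sqrt{2}+1$, yielding $\beta(1/2)=\tfrac{6e^{\sqrt{2}}}{\sqrt{2}+1}\le 10.22$. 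For $\alpha=1/4$, $\delta(1/4)=\tfrac{1+\sqrt{13}}{3}$, and one computes $\delta+\tfrac34\delta^2$ and $6e^{\delta}$ numerically to get $\beta(1/4)\le 8.43$.

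There is no real obstacle in the proof: the entire content is the invocation of Theorem~\ref{th:ad} plus arithmetic. The only mild subtlety is that the three cases of the corollary are nested ($\alpha\le 1/4 \Rightarrow \alpha\le 1/2 \Rightarrow \alpha\le 3/4$), so each stronger hypothesis yields a strictly better numerical bound, consistent with the fact that $\beta(\alpha)$ is increasing on $[0,1]$. This monotonicity is not formally required, but it is reassuring: evaluating $\beta$ at $\alpha=0$ gives the golden-ratio expression $\tfrac{6e^{(1+\sqrt{5})/2}}{(1+\sqrt{5})/2+((1+\sqrt{5})/2)^2}\approx 7.14$, and $\beta$ increases smoothly up to $\beta(1)=6e\approx 16.31$, matching the Fox--Pach bound in the extreme case. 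The proof therefore reduces to recording the three numerical estimates, which can be done in a single short paragraph.
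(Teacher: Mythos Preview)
Your proposal is correct and matches the paper's approach: the paper states the result as a ``direct corollary'' of Theorem~\ref{th:ad} with no further argument, so your plan of applying Theorem~\ref{th:ad} at the three threshold values and checking the arithmetic is exactly what is intended. Your symbolic simplifications (e.g.\ $\delta(3/4)=\sqrt{5}-1$, $\delta(1/2)=\sqrt{2}$, $\delta(1/4)=\tfrac{1+\sqrt{13}}{3}$) and the resulting numerical bounds are all correct.
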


By Corollary~\ref{cor:ad}, a direct consequence of Conjecture~\ref{conj:ad} would be that
every $k$-touching
family of Jordan curves is $10.22\,k$-colorable.

\smallskip

For any $k$-touching family of Jordan curves, the average distance is at most
$k$. Theorem~\ref{th:ad} implies that every family of Jordan curves
is $6ek$-colorable, which is the bound of Fox
and Pach~\cite{FP10} (without the $+1$). 
To understand the limitation of Theorem~\ref{th:ad} it is interesting 
to consider the case $\alpha=o(1)$. Then $\delta$ tends to
$\tfrac12(1+\sqrt{5})$, and we obtain in this case that $\cF$ is $7.14\, k$-colorable. A
particular case is when $\alpha=0$. This is equivalent to say that any
two intersecting Jordan curves are at distance 0, and therefore the
family $\cF$ of Jordan curves can be turned into a $k$-touching family of
Jordan regions (here and everywhere else in this manuscript, it is
crucial that the curves are pairwise non-crossing). Note that it was proved
in~\cite{AEH13} (see also~\cite{EGL16}) that $k$-touching families of
Jordan regions are $(\tfrac{3k}2+o(k))$-colorable.

\medskip

In order to motivate Conjecture~\ref{conj:ad} and give it some credit,
we then prove the following weaker version.

\begin{theorem}\label{th:avd2}
Let $\cF$ be a family of $k$-touching Jordan curves. Then the average
distance in $\cF$ is at most $k/(1+\tfrac1{16e})$.
\end{theorem}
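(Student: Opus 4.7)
Let $T := \sum_{e \in E(G(\cF))} d(e)$, so that the average distance equals $T/|E|$. The goal is to show $T \le \tfrac{16e}{16e+1}\, k\, |E|$, or equivalently $\sum_{e} (k - d(e)) \ge k|E|/(16e+1)$. At any intersection point $p$ of an edge $e=\{a,b\}$, the $d(e)$ separators of $e$ all pass through $p$ (since every separator of $e$ must contain every intersection point of $e$), so together with $a$ and $b$ they give $d(e)+2\le|\cF_p|\le k$. This yields the trivial bound $T\le(k-2)|E|$, which already implies the theorem whenever $k\le 2(16e+1)$; the main work is in the regime of large $k$.

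For $k$ large, I plan to combine the Fox--Pach theorem (every $k$-touching family of Jordan curves is $(6ek+1)$-colorable, whence $G(\cF)$ is $6ek$-degenerate and $|E|\le 6ek\cdot|\cF|$) with a double-counting of separator-edge incidences. First, rewrite $T=\sum_{c\in\cF} N(c)$, where $N(c)$ counts the edges of $G(\cF)$ separated by $c$. For each $c$, an edge $\{a,b\}$ contributing to $N(c)$ has one endpoint in the ``inside'' part $A_c$ and the other in the ``outside'' part $B_c$, and moreover every intersection point of $a,b$ lies on $c$. Thus $N(c)$ can be bounded locally at each intersection point $p\in c$ by $|\cF_p\cap A_c|\cdot|\cF_p\cap B_c|$, and summing over $p\in c$ gives a bound on $N(c)$ in terms of the laminar forest structure of the $\cF_p$'s (at each $p$, the inner quantity is extremised by a path-shaped forest and governed by $\binom{|\cF_p|}{3}\le\binom{k}{3}$).

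The most delicate point, and the main obstacle, is to extract the exact constant $\tfrac{16e}{16e+1}$. I expect this to come from localising Fox--Pach at each curve $c$: the subfamily $\{c\}\cup N(c)$ is itself a $k$-touching family of Jordan curves, so by Fox--Pach it admits a proper $(6ek+1)$-coloring, which together with the bipartition $A_c\sqcup B_c$ of the neighbours of $c$ should control the number of pairs of neighbours of $c$ that intersect across $c$. Summing these local bounds over all $c$ and comparing to $k|E|$ should then give the claimed inequality. An additional subtlety is that edges can have multiple intersection points: separators must pass through \emph{every} intersection point of an edge, so $d(e)$ is anti-correlated with the number $|I(e)|$ of intersection points of $e$, and converting the natural ``$|I(e)|$-weighted'' bound into a plain-average bound will need to exploit this anti-correlation.
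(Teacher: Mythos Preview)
Your reformulation $T=\sum_{c\in\cF}N(c)$ is correct, and the local bound you are aiming at is exactly the paper's Lemma~\ref{lem:fpb}: for any curve $c$, the number of $c$-crossing pairs among the neighbours of $c$ is at most $2ek\cdot\deg(c)$. The problem is what happens when you sum this. You get
\[
T=\sum_{c}N(c)\le 2ek\sum_{c}\deg(c)=4ek\,|E|,
\]
so average distance $\le 4ek$. This is far weaker than the trivial bound $k-2$ you already had; in particular it cannot yield an average distance strictly below $k$. Replacing $2ek$ by any constant coming from a colouring or degeneracy argument (your $6ek$) only makes it worse. No amount of care with the ``anti-correlation'' between $d(e)$ and $|I(e)|$ will rescue this, because the loss is multiplicative in $k$, not additive.

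The missing idea is a self-referential refinement of the double count. In the paper, each edge $ab$ does \emph{not} send charge to all of $\cD(a,b)$ but only to the separators lying in the middle $\epsilon$-fraction of the chain (those $c$ with both $a$ and $b$ at distance $\le\tfrac{1+\epsilon}{2}k$ from $c$). Then the charge received by $c$ is bounded not by $2ek\cdot\deg(c)$ but by $2ek\cdot|N_{(1+\epsilon)/2}(c)|$, and summing gives
\[
\epsilon\,\alpha k\,m \le 4ek\,m_{(1+\epsilon)/2},
\]
where $m_\gamma$ is the number of edges of distance at most $\gamma k$. This says: if the average distance $\alpha k$ is large, then for every $\epsilon$ a definite fraction $\tfrac{\epsilon\alpha}{4e}$ of edges have distance at most $\tfrac{1+\epsilon}{2}k$. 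Integrating this lower bound on $m_\gamma$ against the obvious upper bound $\alpha m\le m-\tfrac1{2t}\sum_{i}m_{(t+i)/2t}$ closes the loop and forces $\alpha\le 1/(1+\tfrac1{16e})$. Your plan stops at the raw bound on $N(c)$ and contains neither the restriction to middle separators nor the bootstrapping via $m_\gamma$; without those two ingredients the argument cannot produce any bound below $k$.
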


An immediate consequence of Theorems~\ref{th:ad} and~\ref{th:avd2} is
the following small improvement over the bound of Fox
and Pach~\cite{FP10} in the case of Jordan curves. 

\begin{corollary}\label{cor:ad2}
Any $k$-touching family of Jordan curves is $15.95k$-colorable.
\end{corollary}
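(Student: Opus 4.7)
The plan is to combine Theorem~\ref{th:avd2} and Theorem~\ref{th:ad} in the obvious way: Theorem~\ref{th:avd2} tells us that any $k$-touching family $\cF$ of Jordan curves automatically has average distance at most $\alpha k$ with
\[
\alpha = \frac{1}{1+\tfrac{1}{16e}} = \frac{16e}{16e+1},
\]
and then Theorem~\ref{th:ad} applied with this particular value of $\alpha$ gives a chromatic number bound of
\[
\frac{6e^{\delta(\alpha)}}{\delta(\alpha)+\delta(\alpha)^2(1-\alpha)}\, k.
\]
It remains to check that this coefficient is less than $15.95$.

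First I would record that $1-\alpha = 1/(16e+1)$, so $1-\alpha$ is a small positive quantity (roughly $0.0225$). Since $0 \le \alpha \le 1$ and in particular $\alpha < 1$, the formula for $\delta(\alpha)$ in Theorem~\ref{th:ad} applies, namely
\[
\delta(\alpha) = \frac{1-2\alpha+\sqrt{4\alpha^2-8\alpha+5}}{2-2\alpha}.
\]
Using $4\alpha^2-8\alpha+5 = 4(1-\alpha)^2+1$, and substituting $1-\alpha = 1/(16e+1)$, this is a perfectly explicit closed form that can be evaluated numerically. A short calculation gives $\delta(\alpha) \approx 1.024$, hence $e^{\delta(\alpha)} \approx 2.784$ and the denominator $\delta(\alpha)+\delta(\alpha)^2(1-\alpha) \approx 1.047$, which together yield the ratio $\tfrac{6e^{\delta}}{\delta+\delta^2(1-\alpha)} \approx 15.94 < 15.95$.

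The proof is therefore essentially a direct substitution together with one numerical verification; there is no combinatorial or geometric work to redo, since everything is already encapsulated in Theorems~\ref{th:ad} and~\ref{th:avd2}. The only mild point of care is to ensure that the inequality $\alpha \le 1$ needed to invoke Theorem~\ref{th:ad} indeed holds for $\alpha = 16e/(16e+1)$, which is immediate. I expect no real obstacle: the hardest part is simply performing the numerical estimate cleanly enough to justify the stated constant $15.95$ (one could, for safety, round all intermediate quantities in the unfavourable direction and check that the resulting upper bound still lies below $15.95$).
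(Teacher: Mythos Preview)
Your proposal is correct and is exactly the argument the paper intends: it explicitly states that Corollary~\ref{cor:ad2} is ``an immediate consequence of Theorems~\ref{th:ad} and~\ref{th:avd2}'' without giving any further detail, so your substitution of $\alpha = 16e/(16e+1)$ into the bound of Theorem~\ref{th:ad} and the numerical check that the resulting coefficient is below $15.95$ is precisely what is required.
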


An
interesting connection between the chromatic number of $k$-touching
families of Jordan curves and the packing number of directed cycles in
directed planar graphs was observed by Reed and Shepherd in
\cite{RS96}. In a planar digraph $G$, let $\nu(G)$ be the maximum
number of vertex-disjoint directed cycles. This quantity has a natural
linear relaxation, where we seek the maximum $\nu^*(G)$ for which
there are weights in $[0,1]$ on each directed cycle of $G$, summing up
to $\nu^*(G)$, such that for each vertex $v$ of $G$, the sum of the
weights of the directed cycles containing $v$ is at most 1. It was
observed by Reed and Shepherd~\cite{RS96} that for any $G$ there are integers
$n$ and $k$ such that $\nu^*(G)=\tfrac{n}{k}$ and $G$ contains a collection
of $n$ pairwise non-crossing directed cycles (counted with
multiplicities) such that each vertex is in at most $k$ of the
directed cycles. If we replace each directed cycle of the collection by its
image in the plane, we obtain a $k$-touching family of
Jordan curves. Assume that this family is
$\beta\,k$-colorable, for some constant $\beta$. Then the family contains an independent set (a
set of pairwise non-intersecting Jordan curves) of size at least
$n/(\beta\,k)$. This independent set corresponds to a packing of
directed cycles in $G$. As a consequence, $\nu(G)\ge
n/(\beta\,k)=\nu^*(G)/\beta$, and then $\nu^*(G)\le
\beta\, \nu(G)$. The following is therefore a direct
consequence of Corollaries~\ref{cor:ad} and~\ref{cor:ad2}.

\begin{theorem}\label{th:gap}
For any planar directed graph $G$, $\nu^*(G)\le 15.95\cdot
\nu(G)$. Moreover, if Conjecture~\ref{conj:ad} holds, then $\nu^*(G)\le 10.22\cdot
\nu(G)$
\end{theorem}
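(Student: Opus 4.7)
The plan is to simply assemble the pieces already laid out in the introduction. The key input is the observation of Reed and Shepherd, which I would invoke first: for any planar digraph $G$ there exist integers $n,k$ with $\nu^*(G)=n/k$ together with a collection of $n$ pairwise non-crossing directed cycles of $G$ (counted with multiplicity) such that every vertex of $G$ lies on at most $k$ of these cycles. Replacing each cycle by its image in the plane produces a $k$-touching family $\cF$ of Jordan curves of cardinality $n$. The non-crossing hypothesis is precisely what guarantees that $\cF$ qualifies as a touching family in the sense of the paper, which is why the reduction works.

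Next I would apply the coloring bounds. By Corollary~\ref{cor:ad2}, $\chi(G(\cF))\le 15.95\,k$, so a largest color class yields an independent set of $\cF$ of size at least $n/(15.95\,k)$. Pairwise non-intersecting Jordan curves in $\cF$ correspond to pairwise vertex-disjoint directed cycles in $G$, hence
\[
\nu(G)\;\ge\;\frac{n}{15.95\,k}\;=\;\frac{\nu^*(G)}{15.95},
\]
which rearranges to $\nu^*(G)\le 15.95\cdot\nu(G)$, the unconditional half of the theorem.

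For the conditional half, assume Conjecture~\ref{conj:ad}. Then the average distance in $\cF$ is at most $k/2$, so Corollary~\ref{cor:ad} (applied with $\alpha=1/2$) gives $\chi(G(\cF))\le 10.22\,k$. Exactly the same argument as above then yields $\nu^*(G)\le 10.22\cdot\nu(G)$.

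Really there is no obstacle here: once Corollaries~\ref{cor:ad} and~\ref{cor:ad2} are in hand, Theorem~\ref{th:gap} is just a packaging statement, and the only thing to verify is that the Reed--Shepherd reduction is correctly quoted and that independent sets in the intersection graph translate back to vertex-disjoint cycle packings in $G$ (which is immediate because two cycles sharing a vertex produce two Jordan curves sharing a point). So the entire proof amounts to a few lines chaining together the Reed--Shepherd observation with the appropriate corollary.
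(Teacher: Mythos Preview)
Your proposal is correct and follows precisely the argument the paper gives in the paragraph preceding Theorem~\ref{th:gap}: invoke the Reed--Shepherd reduction to obtain a $k$-touching family of $n$ Jordan curves with $\nu^*(G)=n/k$, apply Corollary~\ref{cor:ad2} (respectively Corollary~\ref{cor:ad} with $\alpha=1/2$ under Conjecture~\ref{conj:ad}) to bound the chromatic number, extract a large independent set, and translate it back to a packing of vertex-disjoint cycles. There is nothing to add.
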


This improves a result of Reed and Shepherd~\cite{RS96}, who proved that for any planar directed graph $G$, $\nu^*(G)\le 28\cdot
\nu(G)$. The same result with a constant factor of $16.31$ essentially
followed from the result of Fox and Pach~\cite{FP10} (and
the discussion above). Using classical results of Goemans and Williamson~\cite{GM97},
Theorem~\ref{th:gap} also gives improved bounds on the ratio between
the maximum packing of directed cycles in planar digraphs and the dual version
of the problem, namely the minimum number of vertices that needs to be
removed from a planar digraph in order to obtain an acyclic digraph.

\medskip

\noindent {\bf Organization of the paper.} The proofs of
Theorem~\ref{th:k1}, \ref{th:ad} and~\ref{th:avd2} are given in
Sections~\ref{sec:k1}, \ref{sec:ad} and~\ref{sec:avdist},
respectively. Section~\ref{sec:ccl} concludes the paper with some
remarks and open problems.

\section{Proof of Theorem~\ref{th:k1}}\label{sec:k1}

In the proof below we will use the following parameters instead of their
numerical values (for the sake of readability): $\eps=\tfrac14$, $b=\tfrac{18}{\eps}=72$, and $k\ge
7b-14=490$.

\smallskip

The proof proceeds by contradiction. Assume that there exists a
counterexample $\cF$, and take it with a minimum number of
Jordan regions.

We will construct a bipartite planar graph $G$ from $\cF$ as follows: for any
Jordan region $d$ of $\cF$ we add a vertex in the interior of $d$ (such
a vertex will be called a \emph{disk vertex}), and for any contact
point $p$ (i.e.\ any point on at least two Jordan regions), we add a
new vertex at $p$ (such a vertex will be called a \emph{contact
  vertex}). Now, for every Jordan region $d$ and contact point $p$
on $d$, we add an edge between the disk vertex corresponding
to $d$ and the contact vertex corresponding to $p$.

We now start with some remarks on the structure of $G$.

\begin{claim}\label{cl:cbp}
$G$ is a connected bipartite planar graph.
\end{claim}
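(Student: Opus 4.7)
The plan is to verify the three asserted properties — bipartiteness, planarity, and connectedness — separately, using the construction of $G$ together with the minimality of the counterexample $\cF$. Bipartiteness is immediate: the vertex set partitions into disk vertices and contact vertices, and every edge joins one of each type, so $G$ contains no odd cycle.

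For planarity I would exhibit an explicit embedding. Place each contact vertex at its contact point in the plane and each disk vertex at an arbitrary interior point of the corresponding Jordan region. For every edge between a disk vertex $v_d$ and a contact vertex $v_p$ with $p\in\partial d$, draw the edge as a simple arc inside $d$ from $v_d$ to $p$. Since $d$ is homeomorphic to a closed disk, for each fixed $d$ one may choose these arcs so that they pairwise meet only at $v_d$, forming a non-crossing star in $d$. Because the interiors of the Jordan regions in $\cF$ are pairwise disjoint, arcs belonging to different regions can only meet at contact vertices, where sharing a common endpoint is allowed. This yields a valid planar embedding of $G$.

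For connectedness I would appeal to the minimality of $\cF$. A single Jordan region is trivially $1$-colorable, so $|\cF|\ge 2$. If $G(\cF)$ were disconnected, each component would correspond to a strictly smaller simple $k$-touching subfamily of $\cF$; by minimality each such subfamily is $(k+1)$-colorable, and the union of the colorings would $(k+1)$-color $\cF$, contradicting our counterexample assumption. Hence $G(\cF)$ is connected with at least one edge. Every edge $d_1d_2$ of $G(\cF)$ lifts to a path $v_{d_1}\,v_p\,v_{d_2}$ in $G$ through a shared contact point $p$, and every contact vertex has at least two disk-vertex neighbors by definition, so the connectedness of $G(\cF)$ transfers to $G$.

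I expect no serious mathematical obstacle here: bipartiteness is trivial, planarity requires only a little care with the topological drawing (the key point being that a Jordan region is a disk, so a finite star can always be realized inside it without crossings), and connectedness is the standard reduction for a minimum counterexample to a chromatic-number bound.
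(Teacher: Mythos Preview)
Your proposal is correct and follows essentially the same approach as the paper: bipartiteness and planarity are read off from the construction, and connectedness comes from the minimality of $\cF$ (a disconnection of $G$ forces a disconnection of $G(\cF)$, which yields a smaller counterexample). You have simply spelled out the topological details of the embedding and the transfer of connectedness between $G(\cF)$ and $G$ that the paper leaves implicit.
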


\begin{proof}
The fact that $G$ is planar and bipartite easily follows from the
construction. If $G$ is disconnected, then $G(\cF)$ itself is
disconnected, and some connected component contradicts the minimality
of $\cF$.
\end{proof}

\begin{claim}\label{cl:g6}
All the faces of $G$ have degree (number of edges in a boundary walk
counted with multiplicity) at least 6.
\end{claim}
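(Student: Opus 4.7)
Since $G$ is bipartite by Claim~\ref{cl:cbp}, every face of $G$ has even degree, so it suffices to exclude faces of degree $2$ and degree $4$. A face of degree $2$ would require two parallel edges between a disk vertex and a contact vertex, but the construction of $G$ places exactly one edge per incidence between a Jordan region and a contact point on its boundary, so no such parallel edges exist. Thus the task reduces to ruling out faces of degree $4$.

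The key preliminary step is to show that $G$ has minimum degree at least $2$. A contact vertex has degree $\ge 2$ by the definition of a contact point. For a disk vertex $v_d$ of degree at most $1$, I would invoke the minimality of $\cF$. If $v_d$ is isolated then $\cF=\{d\}$ by the connectedness from Claim~\ref{cl:cbp}, which is trivially $(k+1)$-colorable; if $v_d$ has a unique neighbor $p$, then $d$ intersects at most $k-1$ other Jordan regions (all meeting $d$ at $p$, since any point lies on at most $k$ regions of $\cF$). The strictly smaller family $\cF\setminus\{d\}$ is still simple and $k$-touching, hence admits a $(k+1)$-coloring by minimality, which extends to $d$ using one of the at least two colors not used by its neighbors. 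Either case contradicts the assumption that $\cF$ is a counterexample.

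Once $G$ has no multi-edges and minimum degree $\ge 2$, any closed boundary walk of length $4$ must trace a simple $4$-cycle: the only other possibilities, walks of the form $v_1 v_2 v_1 v_3 v_1$ or $v_1 v_2 v_3 v_2 v_1$, each force a vertex of degree $1$. By bipartiteness, a simple $4$-cycle has the form $v_{d_1}\, p_1\, v_{d_2}\, p_2$ with $d_1\ne d_2$ and $p_1\ne p_2$, which forces the two Jordan regions $d_1$ and $d_2$ to share two distinct contact points, contradicting the simplicity of $\cF$. The only delicate point is the minimality step that excludes leaves; the rest is routine, relying on bipartiteness, simplicity, and the construction of $G$.
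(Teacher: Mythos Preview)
Your argument is correct and reaches the same conclusion as the paper, but you handle the degenerate case differently. The paper, upon encountering a face of degree $4$ that bounds only three vertices, observes directly that this forces $\cF$ to consist of just two Jordan regions meeting at a single point, which is trivially $(k+1)$-colorable and hence not a minimal counterexample. You instead first establish that $G$ has minimum degree at least $2$ (which is the content of the paper's later Claim~\ref{cl:min2}, there derived from Claim~\ref{cl:k}) and use this to rule out all non-cycle boundary walks of length $4$ in one stroke.

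Your route is a bit more systematic: once minimum degree $\ge 2$ is in hand, the face-degree analysis is clean and requires no ad hoc case. The cost is that you are essentially proving Claim~\ref{cl:min2} early, so there is some duplication with what follows. The paper's route is shorter here but relies on recognizing the specific small configuration; your approach makes the structural reason (no pendant vertices) explicit. Either is fine.
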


\begin{proof}
Note that by construction, the graph $G$ is simple (i.e.\ there are no
parallel edges). Assume for the sake of contradiction that $G$ has a
face $f$ of degree 4. Then either $f$ bounds three vertices (and $\cF$
consists of two Jordan curves intersecting in a single point, in which
case the theorem trivially holds), or the face $f$ corresponds to two Jordan regions of
$\cF$ sharing two distinct points, which contradicts the fact that
$\cF$ is simple. Since $G$ is bipartite, it follows that each face has degree at least 6.
\end{proof}

Two disk vertices having a common neighbor are said to be \emph{loose
  neighbors} in $G$ (this corresponds to intersecting Jordan regions in $\cF$).

\begin{claim}\label{cl:k}
Every disk vertex has at least $k+1$ loose neighbors in $G$.
\end{claim}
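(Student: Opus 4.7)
The plan is to argue by the standard minimum-counterexample / greedy-extension trick. Since $\cF$ was chosen with the minimum number of Jordan regions among all counterexamples to Theorem~\ref{th:k1}, any proper subfamily of $\cF$ must be $(k+1)$-colorable. The goal is to deduce that if some disk vertex had at most $k$ loose neighbors, we could extend such a coloring to $\cF$, contradicting the fact that $\cF$ is a counterexample.

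More precisely, I would proceed as follows. Suppose for contradiction that there is a Jordan region $d \in \cF$ whose corresponding disk vertex has at most $k$ loose neighbors in $G$; equivalently, $d$ has at most $k$ neighbors in the intersection graph $G(\cF)$. Consider $\cF' := \cF \setminus \{d\}$. Deleting a Jordan region cannot create new intersections or new multiple intersection points, so $\cF'$ is again a simple $k$-touching family of Jordan regions. Since $|\cF'|<|\cF|$, the minimality of $\cF$ implies that $\cF'$ admits a proper coloring with at most $k+1$ colors.

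Now I would extend this coloring to $d$. The neighbors of $d$ in $G(\cF)$ are exactly its loose neighbors, and by assumption there are at most $k$ of them; hence they occupy at most $k$ of the $k+1$ available colors, leaving at least one color free to assign to $d$. This produces a proper $(k+1)$-coloring of $\cF$, contradicting the assumption that $\cF$ is a counterexample to Theorem~\ref{th:k1}. Therefore every disk vertex must have at least $k+1$ loose neighbors.

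There is no real obstacle here: the argument is the usual observation that a minimum counterexample to a coloring statement must be (one more than) vertex-critical, which in this geometric setting translates directly into the loose-neighbor lower bound. The only point to verify carefully is that removing a single Jordan region preserves both the simplicity and the $k$-touching property of the family, which is immediate from the definitions.
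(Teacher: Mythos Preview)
Your argument is correct and is essentially identical to the paper's own proof: both remove the offending Jordan region, invoke minimality of $\cF$ to $(k+1)$-color the rest, and greedily extend since at most $k$ colors are forbidden. The extra checks you make (that removing a region preserves simplicity and $k$-touching) are straightforward and consistent with the paper's implicit use of minimality.
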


\begin{proof}
Assume that some disk vertex has at most $k$ loose neighbors in
$G$. Then the corresponding Jordan region $d$ of $\cF$ intersects at
most $k$ other Jordan regions in $\cF$. By minimality of $\cF$, the family
$\cF\setminus \{d\}$ is $(k+1)$-colorable, and any $(k+1)$-coloring
easily extends to $d$, since $d$ intersects at most $k$ other
Jordan regions. We obtain a $(k+1)$-coloring of $\cF$, which is a contradiction.
\end{proof}

\begin{claim}\label{cl:min2}
$G$ has minimum degree at least 2, and each contact vertex has degree at most $k$.
\end{claim}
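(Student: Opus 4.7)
The plan is to handle the two directions independently, exploiting the fact that the degree of a vertex of $G$ has a clean geometric meaning in $\cF$.

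First, I would verify the bound on contact vertices. By construction, the degree of a contact vertex $p$ in $G$ equals the number of Jordan regions of $\cF$ that contain $p$. The lower bound of $2$ is immediate from the definition of a contact point (a point on at least two Jordan regions). The upper bound of $k$ follows directly from the hypothesis that $\cF$ is $k$-touching: no point lies on more than $k$ Jordan regions.

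The more substantive part is the lower bound of $2$ on disk vertex degrees. I would argue by contradiction, using Claim~\ref{cl:k} as the key input. Suppose a disk vertex $d$ has degree at most $1$ in $G$. If $d$ has no contact points on its boundary, then the corresponding Jordan region meets no other element of $\cF$, so $d$ has $0$ loose neighbors, contradicting Claim~\ref{cl:k}. If $d$ has exactly one contact point $p$, then every Jordan region $d'$ that intersects $d$ must do so at a contact point of $d$ (a shared point lies on $\partial d \cap \partial d'$ since interiors are disjoint), hence at $p$. The number of Jordan regions other than $d$ passing through $p$ is at most $k-1$ by the $k$-touching property, so $d$ has at most $k-1$ loose neighbors in $G$, again contradicting Claim~\ref{cl:k} which requires at least $k+1 \ge 491$ loose neighbors.

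I do not anticipate a genuine obstacle here: the claim is essentially a combination of the definitions (for the degree upper bound and the trivial lower bound on contact vertices) with a one-line application of Claim~\ref{cl:k} (for the disk-vertex bound). The only subtle point to be careful about is ensuring that ``common point of two touching Jordan regions'' implies ``common boundary point,'' which is where the touching hypothesis (disjoint interiors) is silently used.
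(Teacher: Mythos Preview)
Your proposal is correct and follows essentially the same approach as the paper: the contact-vertex bounds come straight from the definitions of contact point and $k$-touching, and the disk-vertex lower bound is obtained by observing that a disk vertex of degree at most one would have at most $k-1$ loose neighbors, contradicting Claim~\ref{cl:k}. Your write-up is slightly more detailed (separating the degree-$0$ and degree-$1$ cases and making explicit why shared points are boundary points), but the argument is the same.
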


\begin{proof}
The fact that each contact vertex has degree at least two and at most $k$ directly
follows from the definition of a $k$-touching family. If $G$ contains
a disk vertex $v$ of degree at most one, then since contact vertices
have degree at most $k$, $v$ has at most $k-1$ loose neighbors in $G$,
which contradicts Claim~\ref{cl:k}.
\end{proof}

\begin{claim}\label{cl:22}
For any edge $uv$, at least one of $u,v$ has degree at least 3.
\end{claim}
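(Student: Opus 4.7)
The plan is to argue by contradiction, using Claim~\ref{cl:k} (every disk vertex has at least $k+1$ loose neighbors) as the decisive constraint against both endpoints having small degree. Since $G$ is bipartite (Claim~\ref{cl:cbp}) with bipartition into disk and contact vertices, the edge $uv$ necessarily joins a disk vertex to a contact vertex. By Claim~\ref{cl:min2} both endpoints already have degree at least $2$, so the only case to rule out is when both $u$ and $v$ have degree exactly $2$.

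Assume therefore, for contradiction, that $u$ is a disk vertex of degree $2$ and $v$ is a contact vertex of degree $2$ (the labeling is forced once we know $G$ is bipartite). Let $v'$ be the other contact neighbor of $u$ and $u'$ the other disk neighbor of $v$. The loose neighbors of $u$ are precisely the disk vertices distinct from $u$ that share a contact vertex with $u$, i.e.\ disk vertices adjacent to $v$ or to $v'$.

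Now I would count these. Since $v$ has degree $2$, it contributes only the single loose neighbor $u'$. The contact vertex $v'$ has degree at most $k$ by Claim~\ref{cl:min2}, and $u$ itself is one of its neighbors, so $v'$ contributes at most $k-1$ additional disk vertices. Hence $u$ has at most $1+(k-1)=k$ loose neighbors, contradicting Claim~\ref{cl:k}. This contradiction shows that at least one of $u,v$ must have degree at least $3$.

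The only subtle point (and it is very mild) is making sure the counting is not off by one: one must correctly exclude $u$ itself from the neighbors of $v'$ and not double-count $u'$ in case $u'$ also happens to be a neighbor of $v'$ (which only helps the bound). Beyond that, the argument is a direct combination of Claim~\ref{cl:k} and Claim~\ref{cl:min2}, and I do not anticipate any real obstacle.
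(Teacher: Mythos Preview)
Your argument is correct and matches the paper's proof exactly: assume a disk $2$-vertex is adjacent to a contact $2$-vertex, then count at most $1+(k-1)=k$ loose neighbors and invoke Claim~\ref{cl:k}. The paper presents this in a single sentence, but the reasoning (and the arithmetic) is identical to yours.
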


\begin{proof}
Assume that a disk vertex $u$ of degree 2 is adjacent to a contact
vertex of degree 2. Then $u$ has at most $1+k-1=k$ loose
neighbors, which contradicts Claim~\ref{cl:k}. 
\end{proof}

A $d$-vertex (resp. $\le d$-vertex, $\ge d$-vertex) is a vertex of degree $d$ (resp. at most $d$, at least
$d$). A $\ge b$-vertex is also said to be a \emph{big vertex}. A
vertex that is not big is said to be \emph{small}.

\begin{claim}\label{cl:big}
Each disk vertex of degree at most 7 has at least one big neighbor.
\end{claim}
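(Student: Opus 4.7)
The plan is a direct counting argument, leveraging Claim~\ref{cl:k} together with the degree restriction on small vertices. Suppose for contradiction that some disk vertex $u$ has degree $d\le 7$ and that every contact-vertex neighbor of $u$ is small, i.e.\ has degree at most $b-1=71$.

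Each loose neighbor of $u$ is, by definition, a disk vertex that shares some common contact-vertex neighbor with $u$. Therefore the loose neighbors of $u$ form a subset of
\[
\bigcup_{p \sim u} (N_G(p)\setminus\{u\}),
\]
where the union ranges over the contact vertices $p$ adjacent to $u$. Each such set $N_G(p)\setminus\{u\}$ has size at most $\deg(p)-1 \le b-2 = 70$, so the number of loose neighbors of $u$ is bounded above by
\[
d\cdot (b-2) \;\le\; 7\cdot 70 \;=\; 490 \;\le\; k.
\]
This contradicts Claim~\ref{cl:k}, which asserts that $u$ has at least $k+1$ loose neighbors. Hence at least one neighbor of $u$ must be a $\ge b$-vertex, that is, a big vertex.

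There is no real obstacle here; the argument is purely a pigeonhole/counting step. The only subtlety is that the inequality $d(b-2)\le k$ is exactly the reason the numerical thresholds $\eps=\tfrac14$, $b=72$ and $k\ge 7b-14=490$ were chosen at the start of the section, so the bound is tight precisely when $d=7$ and $k=490$.
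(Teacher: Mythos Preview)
Your proof is correct and follows essentially the same argument as the paper's: assuming all neighbors are small (degree at most $b-1$), the disk vertex has at most $7(b-2)=490\le k$ loose neighbors, contradicting Claim~\ref{cl:k}. The paper's version is just more terse, but the idea and the arithmetic are identical.
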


\begin{proof}
Assume that some disk vertex $v$ of degree at most 7 has no big
neighbor. It follows that all the neighbors of $v$ have degree at most
$b-1$, and so $v$ has at most $7(b-2)\le k$ loose neighbors, which
contradicts Claim~\ref{cl:k}.
\end{proof}

We now assign to each vertex $v$ of $G$ a charge $\omega(v)=2d(v)-6$,
and to each face $f$ of $G$ a charge $\omega(f)=d(f)-6$ (here the
function $d$ refers to the degree of a vertex or a face). By Euler's
formula, the total charge assigned to the vertices and edges of $G$ is
precisely $-12$. We now proceed by locally moving the charges
(while preserving the total charge) until all vertices and faces have
nonnegative charge. In this case we obtain that $-12\ge 0$, which is a
contradiction. The charges are locally redistributed according to the
following rules (for Rule (R2), we need the following definition: a
\emph{bad vertex} is a disk 3-vertex $v$ adjacent to two contact
2-vertices $u,w$, such that the three faces incident to $v$ have
degree 6 and the neighbors of $u$ and $w$ have degree 3).

\smallskip

\begin{enumerate}[(R1)]

\item For each big contact vertex $v$ and each sequence of three
  consecutive neighbors $u_1,u_2,u_3$ of $v$ in clockwise order around
  $v$, we do the following. If $u_2$ has a unique big neighbor
  (namely, $v$),
  then $v$ gives $2-\eps$ to $u_2$. Otherwise $v$ gives 1 to $u_2$,
  and $(1-\eps)/2$ to each of $u_1$ and $u_3$.

\item Each big contact vertex gives $\eps$ to each bad neighbor. 

\item Each small contact vertex of degree at least 4 gives $\tfrac12$
  to each neighbor.

\item Each contact 3-vertex adjacent to some $\ge 3$-vertex gives
  $\eps$ to each neighbor of degree 2.

\item Each disk vertex of degree at least 4 gives $1+\eps$ to each
  neighbor of degree at most 3.

\item For each disk vertex $v$ of degree 3 and each neighbor $u$ of
  $v$ with $d(u)\le 3$, we do the following. If either $u$ has degree
  3, or $u$ has degree two and the neighbor of $u$ distinct from $v$
  has degree at least 4, then $v$ gives $1-\eps$ to $u$. Otherwise,
  $v$ gives 1 to $u$.

\item Each face $f$ of degree at least 8 gives $\tfrac12$ to each disk
  vertex incident with $f$.

\end{enumerate}

We now analyze the new charge of each vertex and face after all these
rules have been applied. 

By Claim~\ref{cl:g6}, all faces have degree at
least 6. Since faces of degree 6 start with a charge of 0, and do not
give any charge, their new charge is still 0. Let $f$ be a face of
degree $d\ge 8$. Then $f$ starts with a charge of $d-6$ and gives at
most $\tfrac{d}2\cdot \tfrac12$ by Rule (R7). The new charge is then
at least $d-6-\tfrac{d}2\cdot \tfrac12=\tfrac{3d}4-6\ge 0$, as
desired.

\smallskip

We now consider disk vertices. Note that these vertices receive charge
by Rules (R1--4) and (R7), and give charge by Rules (R5--6). Consider
first a disk vertex $v$ of degree $d\ge 8$. Then $v$ starts with a charge
of $2d-6$ and gives at most $d(1+\eps)$ (by Rule (R5)), so the new
charge of $v$ is at least $2d-6-d(1+\eps)=d(1-\eps)-6\ge 0$ (since $\eps= \tfrac14$). 

Assume now that $v$ is a disk vertex of degree $4\le d \le 7$. Then
by Claim~\ref{cl:big}, $v$ has at least one big neighbor. The vertex $v$ starts with a charge of $2d-6$, receives at least $2-\eps$ by Rule
(R1), and gives at most
$(d-1)(1+\eps)$ by Rule (R5). The new charge of $v$ is then at least
$2d-6+2-\eps -(d-1)(1+\eps)=d(1-\eps)-3\ge 0$ (since $\eps= \tfrac14$).

We now consider a disk vertex $v$ of degree 3. Again, it follows
from Claim~\ref{cl:big} that $v$ has at least one big neighbor. The
vertex $v$ starts with a charge of 0, and since $v$ has at least one
big neighbor, $v$ receives at least $2-\eps$
from its big neighbors by Rule
(R1). Let $w$ be a big neighbor of $v$, and assume first that at least one of the two neighbors of $v$
distinct from $w$ (call them $u_1,u_2$) is not a 2-vertex adjacent to
two 3-vertices. Then by Rule (R6), $v$
gives at most $2-\eps$ to $u_1,u_2$ (recall that by Claim~\ref{cl:22}, no two vertices of degree $2$ are adjacent in $G$). In this case the new
charge of $v$ is at least $2-\eps-(2-\eps)\ge 0$, as desired. Assume
now that $u_1,u_2$ both have degree two and their
neighbors all have degree 3. In this case $v$ gives 1 to each of
$u_1,u_2$ and the new charge of $v$ is at least $2-\eps-2\ge
-\eps$. If $v$ is incident to a face of degree at least 8, $v$
receives at least $\tfrac12$ from such a face, and its new charge is
at least $-\eps+\tfrac12\ge 0$, as desired. So we can assume that all
the faces incident to $v$ are faces of degree 6. In other words, $v$
is a bad vertex. Then $w$ gives an additional charge of $\eps$ to $v$
by Rule (R2), and the new charge of $v$ in
this last case is at least $-\eps+\eps\ge 0$, as desired.

Assume now that $v$ is a disk vertex of degree two. Then the vertex $v$ starts with a charge of $-2$.
By
Claim~\ref{cl:big}, $v$ has a big neighbor, call it $w$. By
Claim~\ref{cl:22}, the neighbor of $v$ distinct from $w$, call it $u$,
has degree at least 3. If $u$ is big then $v$
receives a charge of $1+1=2$ by Rule (R1) and its new charge is thus at
least $-2+2=0$, so we can assume that $u$ is small (in particular, $v$
receives $2-\eps$ from $w$ by Rule (R1)). If $u$ has degree at
least 4, then $u$ gives a charge of $\tfrac12$ to $v$ by Rule (R3) and the new
charge of $v$ is then at least $-2+2-\eps+\tfrac12\ge 0$. If $v$ lies on a
face of degree at least 8, then $v$ receives $\tfrac12$ from this face
by Rule (R7), and its new charge is then at least
$-2+2-\eps+\tfrac12\ge 0$. So we can assume that $u$ has degree 3 and
all the faces containing $v$ have degree 6. If $u$ is adjacent to some
$\ge 3$-vertex, then $u$ gives $\eps$ to $v$ by Rule (R4), and in this
case the new charge of $v$ is at least $-2+2-\eps+\eps\ge 0$. So we
can further assume that all the neighbors of $u$ are 2-vertices. Call
$u_1,u_2$ the neighbors of $u$ distinct from $v$, and for $i=1,2$ let
$v_i$ be the neighbor of $u_i$ distinct from $u$. Since $u$ has degree
3, it follows from Claim~\ref{cl:big} that $v_1$ and $v_2$ are
big. Let $v^+$ (resp. $v^-$) be the neighbor of $w$ immediately
succeeding (resp. preceding) $v$ in clockwise order around $w$. The
faces containing $v$ have degree 6, and since $G$ is bipartite with
minimum degree at least 2 (by Claims~\ref{cl:cbp} and~\ref{cl:min2}), each of
these two faces is bounded by 6 vertices. As a consequence, we can assume that $v^+$ is
adjacent to $v_1$ and $v^-$ is adjacent to $v_2$ (see Figure~\ref{fig:cont}). It follows that each
of $v^+,v^-$ has at least two big neighbors. Therefore, by Rule (R1),
$v$ received from $w$ (in addition to the
$2-\eps$ that were taken into account earlier)
$2\cdot(1-\eps)/2=1-\eps$. So the new charge of $v$ is at least
$-2+2-\eps+1-\eps=1-2\eps\ge 0$, as desired.

\begin{figure}[htbp]
\begin{center}
\includegraphics[scale=1.4]{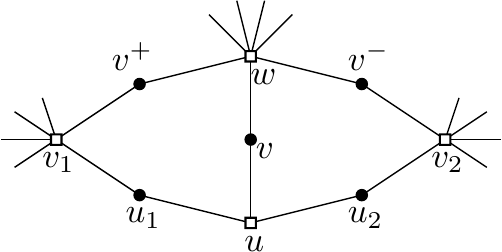}
\caption{Contact
  vertices are depicted with white squares and disk vertices are
  depicted with black dots.\label{fig:cont}}
\end{center}
\end{figure}

\smallskip

We now study the new charge of contact vertices. Note that contact
vertices give charge by Rules (R1--4) and receive charge by Rules
(R5--7). Consider a contact vertex $v$ of degree two. Then $v$ starts
with a charge of $-2$. By
Claim~\ref{cl:22}, the two neighbors of $v$ (call them $u$ an $w$)
have degree at least 3. If they both have degree at least 4, then they
both give $1+\eps$ to $v$ by Rule (R5), and the new charge of $v$ is
at least $-2+2(1+\eps)\ge \eps$. If one of $u,w$ has degree at least 4 and the other has degree 3, then $v$ receives $1+\eps$ by Rule (R5) and
$1-\eps$ by Rule (R6). In this case the new charge of $v$ is at least
$-2+1+\eps+1-\eps=0$. Finally, if $u$ and $w$ both have degree 3, then
they both give 1 to $v$ by Rule (R6), and the new charge of $v$ is at
least $-2+1+1=0$, as desired.

Consider a contact vertex $v$ of degree 3. Then $v$ starts with a
charge of 0, and only gives charge if Rule (R4) applies. In this
case, $v$ gives a charge of $\eps$ to at most two of its neighbors. However, if Rule
(R4) applies, then by definition, $v$ has a neighbor of degree at least
3. Then $v$ receives at least $1-\eps$ from such a neighbor by Rules
(R5--6). In this case, the new charge of $v$ is at least
$0-2\eps+1-\eps\ge 0$ (since $\eps= \tfrac14$).

Assume now that $v$ is a contact vertex of degree $d\ge 4$. Then $v$
starts with a charge of $2d-6$. If $v$ is small, then $v$ gives at
most $d\cdot \tfrac12$ by Rule (R3), and the new charge of $v$ is then
at least $2d-6-d\cdot \tfrac12=\tfrac{3d}2-6\ge 0$. Assume now that
$v$ is big. In this case, applications of Rule (R1) cost $v$ no more than
$d(2-\eps)$ charge. We claim
the following.

\begin{figure}[htbp]
\begin{center}
\includegraphics[scale=1.4]{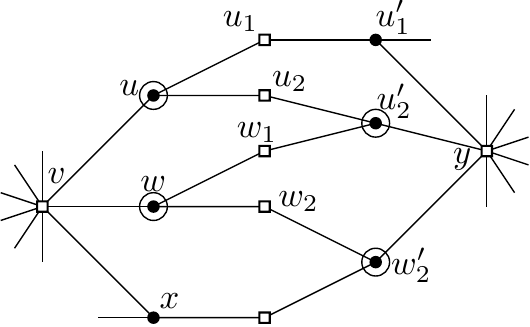}
\caption{Illustration of the proof of Claim~\ref{cl:R2}. The four removed vertices are circled.\label{fig:dis}}
\end{center}
\end{figure}

\begin{claim}\label{cl:R2}
 For every big contact vertex $v$ of degree $d$,  applications of Rule
 (R2) cost $v$ no more than $\tfrac{2d}3\cdot \eps$ charge.
\end{claim}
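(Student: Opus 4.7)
The plan is to show that among any three cyclically consecutive neighbors of $v$ in the rotation around $v$, at most two are bad. A standard averaging argument over the $d$-cycle of neighbors then yields $\#(\text{bad neighbors of }v)\le\lfloor 2d/3\rfloor\le 2d/3$, which gives the required bound on the charge $v$ pays by Rule~(R2).

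To establish the local claim, suppose for contradiction that three cyclically consecutive neighbors $x_{i-1},x_i,x_{i+1}$ of $v$ are all bad. Unpacking the definition of ``bad'' together with Claims~\ref{cl:g6} and~\ref{cl:min2}, I first extract the rigid local picture around $v$. The two hexagonal faces of $G$ incident with $v$ between the $x_j$'s must take the form
$$v - x_{i-1} - w_{i-1} - y' - u_i - x_i - v \quad\text{and}\quad v - x_i - w_i - y - u_{i+1} - x_{i+1} - v,$$
where $w_{i-1},u_i,w_i,u_{i+1}$ are contact $2$-vertices and $y',y$ are disk $3$-vertices. Each bad $x_j$ has $(d-1)+1+1=d+1$ loose neighbors, so Claim~\ref{cl:k} forces $d=k$. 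Next, I trace the third face of $x_i$, which is also a hexagon by badness and does not contain $v$: it has the form $u_i-x_i-w_i-y-z-y'-u_i$, so the third contact neighbors of $y'$ and of $y$ must coincide at a single contact vertex~$z$; applying Claim~\ref{cl:k} to $y'$ then forces $\deg(z)=k$. In particular, $y'$ and $y$ are themselves loose neighbors through~$z$.

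I now contradict the minimality of~$\cF$. Deleting the four Jordan regions circled in Figure~\ref{fig:dis} from~$\cF$ yields a strictly smaller simple $k$-touching family~$\cF'$, which by minimality admits a proper $(k+1)$-coloring~$c$. The main remaining step is to extend $c$ to a proper $(k+1)$-coloring of~$\cF$ by coloring the four deleted regions one at a time. Each deleted region has exactly $k+1$ loose neighbors in~$\cF$ and so carries no slack at a single deletion; but the deleted set contains enough mutual loose-neighbor pairs---in particular, the pair $\{y',y\}$ through $z$, which exists only by virtue of the three-consecutive-bad assumption---to permit an order in which each deleted region, when its turn comes, sees at most $k$ of its loose neighbors already colored. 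The resulting proper $(k+1)$-coloring of~$\cF$ contradicts the minimality of~$\cF$.

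The main obstacle lies in that last extension step: because each deleted region is saturated (exactly $k+1$ loose neighbors), the scheduling must exploit the full mutual loose-neighbor structure inside the deletion set, and the ``extra'' pair $\{y',y\}$ forced by the three-consecutive-bad assumption is what makes such a scheduling feasible. The local rigidity derived above (the forced hexagons and the common vertex $z$) is precisely what provides this additional adjacency.
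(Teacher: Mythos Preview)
Your overall strategy and the local structural analysis are essentially the paper's: you correctly reduce to showing that no three consecutive neighbors of $v$ are bad, and your derivation of the rigid hexagon picture (the vertices $y',y$ and the shared contact vertex $z$) matches the paper's configuration. The four regions you delete are exactly the four the paper deletes.

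The gap is in the extension step. Your proposed mechanism---``permit an order in which each deleted region, when its turn comes, sees at most $k$ of its loose neighbors already colored''---cannot succeed. You yourself established (using $d=k$ and $\deg(z)=k$) that each of the four deleted regions has \emph{exactly} $k+1$ loose neighbors in $\cF$. Whichever region is scheduled last will therefore see all $k+1$ of its loose neighbors already colored, not at most $k$, so a greedy sweep through any ordering fails at the final vertex. The extra internal adjacency $\{y',y\}$ through $z$ does not help here: it raises the internal degree but does not lower the \emph{total} number of loose neighbors of the last vertex.

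What the paper does instead is a list-coloring argument. After deleting the four regions and $(k+1)$-coloring the rest, each of the two ``outer'' deleted regions has at most $k-1$ colored loose neighbors and each of the two ``inner'' ones at most $k-2$; so their lists of available colors have sizes at least $2,2,3,3$. The mutual loose-neighbor graph on the four deleted regions is $K_4$ minus one edge (the edge between the two outer regions), and the list sizes match the degrees in this graph. The theorem of Erd\H{o}s, Rubin and Taylor that every $2$-connected graph other than a clique or an odd cycle is degree-choosable then finishes the extension. This is precisely the step where a naive greedy order is insufficient and a genuine list-coloring tool is needed.
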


\begin{proof}
We will show that $v$ never gives a charge of
  $\eps$ to three consecutive neighbors of $v$, which implies the claim.
  Assume for the sake of contradiction that $v$ gives a charge of
  $\eps$ to three consecutive neighbors $u,w,x$ of $v$ (in clockwise order
  around $v$). Assume that the neighbors of $u$ are $v,u_1,u_2$ (in
  clockwise order around $u$), and the neighbors of $w$ are $v,w_1,w_2$ (in
  clockwise order around $w$). Recall that by the definition of a bad
  vertex, each of $u_1,u_2,w_1,w_2$ has degree two, and all the faces
 incident to $u$ or $w$ have degree 6. Let $u_1', u_2'$ be the
  neighbors of $u_1,u_2$ distinct from $u$, and let $w_1', w_2'$ be the
  neighbors of $w_1,w_2$ distinct from $w$. By the definition of a bad
  vertex, each of $u_1', u_2',w_1',w_2'$ has degree 3, and since all the faces
  incident to $u$ or $w$ have degree 6, $u_2'=w_1'$ and the vertices
  $u_1', u_2',w_2'$ have a common neighbor, which we call
  $y$. Again, by the definition of a bad vertex, the neighbor of
  $w_2'$ distinct from $y$ has degree two and is adjacent to $x$ (see
  Figure~\ref{fig:dis}). Let $\cF'$ be the family obtained from $\cF$
  by removing the disks corresponding to $u,w,u_2',w_2'$. By
  minimality of $\cF$, $\cF'$ has a $(k+1)$-coloring
  $c$, which we seek to extend to $u,w,u_2',w_2'$ (by a slight abuse
  of notation we identify a disk vertex of $G$ with the corresponding disk of
  $\cF$). Note that $u$
  and $w_2'$ have at most $k-1$ colored neighbors, while $w$ and $u_2'$ have at
  most $k-2$ colored neighbors. Since $k+1$ colors are available, it
  follows that each of $u,w_2'$ has a list of at least 2 available
  colors, while each of $w,u_2'$ has a list of at least 3 available
  colors. We must choose a color in each of the four lists such that
  each pair of vertices among $u,w,u_2',w_2'$, except the pair
  $uw_2'$, are assigned different colors. This is equivalent to the
  following problem: take $H$ to be the complete graph on 4 vertices minus an
  edge, assign to each vertex $z$ of $H$ an arbitrary list of at least $d_H(z)$
  colors, and then choose a color in each list such that adjacent vertices
  are assigned different colors. It follows from a classical result of
  Erd\H os, Rubin and Taylor~\cite{ERT} that this is possible for any
  2-connected graph distinct from a complete graph and an odd cycle
  (and in particular, this holds for $H$). Therefore, the $(k+1)$-coloring $c$ of $\cF'$
  can be extended to $u,w,u_2',w_2'$ to obtain a $(k+1)$-coloring of
  $\cF$, which is a contradiction. This proves Claim~\ref{cl:R2}.
\end{proof}

Hence, if $v$ is a big contact vertex of degree $d$, then the new
charge of $v$ is at least $2d-6-d(2-\eps)-\tfrac{2d}3\cdot
\eps=\tfrac{d}3\cdot \eps-6$. Since $v$ is big, $d\ge b$ and so the
new charge of $v$ is at least $b\eps/3-6= 0$ (since $b= 18/\eps$). It follows that the new charge of all vertices and
faces is nonnegative, and then the total charge (which equals $-12$)
is nonnegative, which is a contradiction. This concludes the proof of
Theorem~\ref{th:k1}.\hfill $\Box$

\section{Proof of Theorem~\ref{th:ad}}\label{sec:ad}

We start with a simple lemma showing that in order to bound the chromatic
number of $k$-touching families of Jordan curves, it is enough to
bound asymptotically the number of edges in their intersection graphs.

\begin{lemma}\label{lem:red}
Assume that there is a constant $a>0$ and a function $f=o(1)$ such that for any integers
$k,n$ and any $k$-touching family $\cF$ of $n$ Jordan curves, the graph
$G(\cF)$ has at most $ak(1+f(k))n$ edges. Then for any integer $k$, any $k$-touching family
of Jordan curves is $2ak$-colorable.
\end{lemma}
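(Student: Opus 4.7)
The plan is a classical degeneracy argument, sharpened by a geometric blow-up and a limit. The hypothesis directly gives average degree at most $2ak(1+f(k))$ in every $k$-touching subfamily, yielding $\chi(G(\cF)) \le 2ak(1+f(k))+1$; this is too weak by an $o(1)$ factor and by~$+1$. I would tighten the edge bound itself to $m \le akn - n/2$ by amplifying the hypothesis through a blow-up construction, and then finish with degeneracy.

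The key step is the following geometric blow-up: given a $k$-touching family $\cF$ of $n$ Jordan curves and any integer $N \ge 1$, one can construct an $(Nk)$-touching family $\cF^{(N)}$ of $Nn$ Jordan curves whose intersection graph is the graph-theoretic $N$-blow-up $G(\cF)[N]$, in which each vertex of $G(\cF)$ is replaced by an $N$-clique and each edge by a copy of $K_{N,N}$. I would build this by replacing each curve $c\in \cF$ with $N$ Jordan curves drawn in a thin tubular neighbourhood of $c$, all osculating $c$ at every pre-existing contact point; at a point~$p$ previously lying on~$j$ curves of~$\cF$, the $Nj$ new copies then pass through a small perturbation of $p$ sharing a common tangent line, so they are pairwise non-crossing and touching, and no contact point of multiplicity greater than $Nk$ is created.

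Set $m = |E(G(\cF))|$. The blow-up has $N^2$ edges for each original edge and $\binom{N}{2}$ for each original vertex, so
$$
|E(G(\cF^{(N)}))| = N^2 m + n\binom{N}{2}.
$$
Applying the hypothesis to $\cF^{(N)}$ (an $(Nk)$-touching family on $Nn$ curves) and dividing by~$N^2$ gives
$$
m + \frac{n(N-1)}{2N} \le akn\bigl(1+f(Nk)\bigr).
$$
Letting $N \to \infty$ (so $f(Nk)\to 0$) yields the sharpened edge bound $m \le akn - n/2$. Since any subfamily of $\cF$ is again $k$-touching, the same estimate applies to every induced subgraph of $G(\cF)$; hence every such subgraph has a vertex of degree at most $2ak-1$, so $G(\cF)$ is $(\lfloor 2ak\rfloor-1)$-degenerate and the greedy colouring uses at most $\lfloor 2ak\rfloor \le 2ak$ colours.

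The main obstacle is the geometric blow-up itself: one needs to realise the abstract $N$-blow-up as a genuine touching family of Jordan curves while controlling the $(Nk)$-touching property. I would handle this by performing the thickening purely locally around each original contact point, choosing the $N$ copies of every incident curve to share a common tangent line at (a small perturbation of) the contact point, so that all new intersections remain tangential and no contact point of multiplicity greater than $Nk$ is introduced.
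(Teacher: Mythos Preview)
Your proof is correct and follows essentially the same approach as the paper's own proof: both replace each Jordan curve by $N$ (the paper uses $\ell$) parallel copies meeting at the original contact points, observe that the resulting family is $(Nk)$-touching with $Nn$ curves and $N^2 m + \binom{N}{2}n$ edges, apply the hypothesis, divide through by $N^2$, and let $N\to\infty$ to kill the $f$-term and gain the extra $-n/2$, then finish by degeneracy. The paper's description of the blow-up is terser (``$\ell$ concentric copies \dots\ any portion of Jordan curve between two intersection points is replaced by $\ell$ parallel portions''), but it is the same construction you describe with tubular neighbourhoods and shared tangents.
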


\begin{proof}
Let $\cF$ be a $k$-touching family of $n$ Jordan curves, and let $m$
denote the number of edges of $G(\cF)$.
For some integer
$\ell$, replace each element $c\in\cF$ by $\ell$ concentric copies of
$c$, without creating any new intersection point (i.e., any portion of
Jordan curve between two intersection points is replaced by $\ell$
parallel portions of Jordan curves). Let $\ell\cF$ denote the
resulting family. Note that $\ell\cF$ is $\ell k$-touching, contains
$\ell n$ elements, and $G(\ell \cF)$ contains ${\ell \choose 2} n +
\ell^2 m$ edges. Hence, we have ${\ell \choose 2} n +
\ell^2 m < a \cdot \ell k(1+f(\ell k)) \cdot \ell n$. Therefore,
$m<(ak(1+f(\ell k))-\tfrac12+\tfrac1{2\ell})n$, and $G(\cF)$ contains a vertex of
degree at most $2ak(1+f(\ell k))-1+\tfrac1{\ell}$. This holds for any $\ell$,
and since the degree of a vertex is
an integer and $f=o(1)$, $G(\cF)$ indeed contains a vertex of
degree at most $2ak-1$. We proved that $k$-touching families of
Jordan curves are $(2ak-1)$-degenerate, and therefore $2ak$-colorable.
\end{proof}

We will also need the following two lemmas.

\begin{lemma}\label{lem:faith}
For any integers $\ell,k,d$ such that $d+2\le \ell \le k$, and for any
$p \in [0,1)$,
$$(1-p)^{\ell-2}+p(1-p)^{\ell-3}(\ell-d-2) \ge (1-p)^{k-2}+p(1-p)^{k-3}(k-d-2).$$
\end{lemma}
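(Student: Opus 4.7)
The plan is to reduce the inequality to a discrete monotonicity statement. Set
$$g(m) := (1-p)^{m-2} + p(1-p)^{m-3}(m-d-2)$$
for integers $m \ge d+2$, so that the lemma is the assertion $g(\ell) \ge g(k)$ whenever $d+2 \le \ell \le k$. I would prove the stronger one-step statement $g(m+1) \le g(m)$ for every integer $m \ge d+2$ and then telescope over $m = \ell, \ell+1, \ldots, k-1$.

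The one-step inequality is a direct algebraic computation. Expanding
$$g(m+1) - g(m) = (1-p)^{m-1} - (1-p)^{m-2} + p(1-p)^{m-2}(m-d-1) - p(1-p)^{m-3}(m-d-2),$$
factoring out $(1-p)^{m-3}$, and simplifying the resulting bracket using the elementary identities $(1-p)^2 - (1-p) = -p(1-p)$ and $(1-p)(m-d-1) - (m-d-2) = 1 - p(m-d-1)$, the bracket collapses to $p^2(d+2-m)$. Thus the expected outcome is
$$g(m+1) - g(m) = p^2 (1-p)^{m-3}(d+2-m),$$
which is $\le 0$ whenever $m \ge d+2$, since $p \in [0,1)$ ensures both $p^2 \ge 0$ and $(1-p)^{m-3} > 0$.

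Telescoping then gives $g(k) - g(\ell) = \sum_{m=\ell}^{k-1}(g(m+1)-g(m)) \le 0$, which is precisely the lemma. There is no real obstacle: the proof is essentially a single algebraic identity followed by a sign check. The only subtlety worth flagging is that the hypothesis $p<1$ is essential so that $(1-p)^{m-3}$ is strictly positive with the correct sign; in the boundary case $m = d+2$ the factor $d+2-m$ vanishes, which harmlessly neutralises any edge effect coming from a possibly negative exponent $m-3$ in the formula for $g(m)$.
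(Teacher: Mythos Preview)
Your proof is correct. The algebraic identity
\[
g(m+1)-g(m)=p^2(1-p)^{m-3}(d+2-m)
\]
is exactly right, and from it the monotonicity for $m\ge d+2$ follows immediately.

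Your route differs from the paper's. The paper treats $\ell$ as a real variable, differentiates $f(\ell)=(1-p)^{\ell-2}+p(1-p)^{\ell-3}(\ell-d-2)$, and bounds the derivative via $\log(1-p)\le -p$; this only shows $f'(\ell)\le 0$ for $\ell\ge d+3$, so the paper separately checks $f(d+2)=f(d+3)=(1-p)^d$ to cover the first step. Your discrete-difference computation is more elementary---it avoids calculus and the logarithm estimate altogether and yields an exact factorisation rather than an inequality---and it handles the boundary $m=d+2$ in one stroke (the difference simply vanishes there). The paper's approach, on the other hand, would extend more directly if one ever needed the statement for non-integer $\ell$.
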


\begin{proof}
For fixed $d \in \mathbb{R}$ and $p \in [0,1)$ we write $f(\ell):=(1-p)^{\ell-2}+p(1-p)^{\ell-3}(\ell-d-2)$. Note that $f(d+2)=(1-p)^{d}=f(d+3)$. Furthermore, for all reals $\ell \geq d+3$, $ \tfrac{d}{d\ell} f(\ell) = (1-p)^{\ell-3} \cdot \left( \log(1-p) \cdot(1+p \cdot(\ell-d-3)) +p \right) \leq  (1-p)^{\ell-3}  \left( -p \cdot(1+p \cdot(\ell-d-3)) +p \right) \leq 0.$
 So $f(\ell) \geq f(k)$ for all integers $d+2 \leq \ell \leq k$.
\end{proof}

\begin{lemma}\label{lem:expc}
For any reals $1\le \delta < 2$ and $k\ge 2$, we have
$(1-\tfrac{\delta}{k})^{k-3}\ge (1-\tfrac{\delta}{k})^{k-2}\ge e^{-\delta}$.
\end{lemma}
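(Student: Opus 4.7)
The plan is to handle the two inequalities separately. The first bound, $(1-\tfrac{\delta}{k})^{k-3}\ge (1-\tfrac{\delta}{k})^{k-2}$, is immediate from the hypotheses: since $\delta<2\le k$, we have $0<1-\tfrac{\delta}{k}<1$, and raising a number in $(0,1)$ to a larger exponent can only decrease it. (The corner case $k=2$ works the same way: $(1-\tfrac{\delta}{2})^{-1}\ge 1=(1-\tfrac{\delta}{2})^{0}$.)

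For the second inequality $(1-\tfrac{\delta}{k})^{k-2}\ge e^{-\delta}$, I would first dispose of $k=2$, where the left-hand side equals $1$ and the right-hand side is at most $1$, and then assume $k>2$. Taking logarithms turns the desired bound into
\[
1-\tfrac{\delta}{k}\ \ge\ e^{-\delta/(k-2)}.
\]
I would then apply the standard inequality $e^{x}\ge 1+x$ with $x=\tfrac{\delta}{k-2}$, getting $e^{\delta/(k-2)}\ge \tfrac{k-2+\delta}{k-2}$, and hence $e^{-\delta/(k-2)}\le \tfrac{k-2}{k-2+\delta}$. So it suffices to verify that
\[
1-\tfrac{\delta}{k}\ =\ \tfrac{k-\delta}{k}\ \ge\ \tfrac{k-2}{k-2+\delta},
\]
which after clearing denominators becomes $(k-\delta)(k-2+\delta)\ge k(k-2)$.

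Expanding the left-hand side gives $k(k-2)+\delta(2-\delta)$, so the whole inequality reduces to $\delta(2-\delta)\ge 0$. This is exactly where the upper bound $\delta<2$ is used: for $\delta\in[1,2)\subset[0,2]$ the product $\delta(2-\delta)$ is nonnegative, and the argument closes. There is no real obstacle here; the only mild trick is to compare $(1-\tfrac{\delta}{k})^{k-2}$ with $e^{-\delta/(k-2)}$ rather than with the more familiar $e^{-\delta}$, so that a single application of $e^{x}\ge 1+x$ is enough to finish.
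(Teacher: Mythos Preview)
Your proof is correct. Both you and the paper handle the first inequality trivially and then reduce the second to showing $1-\tfrac{\delta}{k}\ge e^{-\delta/(k-2)}$ for $k>2$, so the overall strategy is the same. The difference lies in how that last inequality is verified: the paper invokes the second-order bound $e^{-x}\le 1-x+\tfrac{x^2}{2}$, which after simplification only works for $k\ge 4$ and forces a separate ad hoc treatment of $k=2,3$ via $e^{-x}\le 1-\tfrac{x}{3}$. You instead use the single inequality $e^{x}\ge 1+x$ (equivalently $e^{-x}\le \tfrac{1}{1+x}$), which reduces the task to the clean algebraic identity $(k-\delta)(k-2+\delta)-k(k-2)=\delta(2-\delta)\ge 0$ and covers all real $k>2$ at once. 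Your route is therefore slightly more economical, needing only the $k=2$ corner case and one standard inequality rather than two.
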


\begin{proof}
We clearly have $(1-\tfrac{\delta}{k})^{k-3}\ge
(1-\tfrac{\delta}{k})^{k-2}$. To see that the second part of the
inequality holds, observe first that for any real $0 \le x\le  2$, we have
$e^{-x}\le 1-\tfrac{x}3$ and thus the
desired inequality holds for $k=2,3$. 

Assume now that $k\ge 4$. Note that for any real $x\ge  0$, we have
$e^{-x}\le 1-x+\tfrac{x^2}2$. Thus,
\begin{eqnarray*}
\exp(-\tfrac{\delta}{k-2}) \le
  1-\tfrac{\delta}{k-2}+\tfrac{\delta^2}{2(k-2)^2}=1-\tfrac{\delta}{k}+\tfrac{\delta}{2k(k-2)^2}((\delta-4)k+8)\le 1-\tfrac{\delta}{k},
\end{eqnarray*}
with the rightmost inequality holding since $(4-\delta)k\ge 2k\ge 8$. It follows that $\exp(-\delta)\le
(1-\tfrac{\delta}{k})^{k-2}$, as desired.
\end{proof}

We are now ready to prove Theorem~\ref{th:ad}.

\medskip

\noindent \emph{Proof of Theorem~\ref{th:ad}.}
Let $\cF$ be a $k$-touching family of $n$ Jordan curves, with average
distance at most $\alpha k$, and let
$\delta=\delta(\alpha)$ be as provided by Theorem~\ref{th:ad}.
Note
that since $0\le \alpha \le 1$, we have $1\le \delta \le \tfrac12(1+\sqrt{5})<2$. We denote by
$E$ the edge-set of $G(\cF)$, and by $m$ the cardinality of $E$. We
will prove that $m<
\tfrac{3e^\delta\,k}{\delta+\delta^2(1-\alpha-\tfrac2k)}$. Using
Lemma~\ref{lem:red}, this implies that the chromatic number of any
$k$-touching family of Jordan curves with average distance at most
$\alpha k$ is at most
$\tfrac{6e^\delta\,k}{\delta+\delta^2(1-\alpha)}$. Note that the chosen
value $\delta(\alpha)$ of $\delta$ minimizes the value of
$\tfrac{6e^\delta}{\delta+\delta^2(1-\alpha)}$. In the remainder of the
proof, we will only use the fact that $1\le \delta < 2$.

As
observed in~\cite{EGL16}, we can assume without loss of generality
that each Jordan curve is a polygon (this is a simple
consequence of the fact that any simple plane graph can be drawn
with straight-line edges).

\smallskip

We recall that for two
intersecting Jordan curves $a,b\in \cF$, $\cD(a,b)$ is the set of
Jordan curves $c$ distinct
from $a,b$ such that the (closed) region bounded by $c$ contains
exactly one of $a,b$, and the cardinality of
$\cD(a,b)$ (which is called \emph{the distance between} $a$ and
$b$) is denoted by $d(a,b)$.
For each edge $ab\in E$, we choose an arbitrary point $x(a,b)$ in the
intersection of the Jordan curves corresponding to $a$ and
$b$. Observe that since the curves are pairwise non-crossing, $x(a,b)$
is contained in all the curves of $\cD(a,b)$.
We now select each Jordan curve of $\cF$ uniformly at random, with
probability $p=\tfrac{\delta}{k}$. Let $\cF'$ be the obtained
family. The expectation of the number of Jordan curves in $\cF'$ is
$pn$. For any pair of intersecting Jordan curves $a,b$, we denote by
$P_{ab}$ the
probability that the set $S$ of Jordan curves of $\cF'$
containing $x(a,b)$ satisfies 

\begin{enumerate}[(1)]
\item $S$ has size at most 3,
\item $a,b \in
S$, and
\item if $|S|=3$, then the Jordan curve of $S$ distinct from
$a$ and $b$ is not an element of $\cD(a,b)$. 
\end{enumerate}

Observe that 
\begin{eqnarray*}
P_{ab} & = & p^2(1-p)^{\ell-2}+p^3(1-p)^{\ell-3}(\ell-d(a,b)-2),
\end{eqnarray*}
where $\ell \in \{d(a,b)+2, \ldots,k\}$ denotes the number of Jordan curves containing $x(a,b)$
in $\cF$.

We say that an edge $ab\in E$ is \emph{good} if $a,b$ satisfy (1), (2), and (3) above.
It follows from Lemmas~\ref{lem:faith} and~\ref{lem:expc} that the expectation of the
number of good edges is

\begin{eqnarray*}
\sum_{ab \in E} P_{ab} & \ge & \sum_{ab\in E}\left(p^2(1-p)^{k-2}+p^3(1-p)^{k-3}(k-d(a,b)-2)\right)\\
& \ge & p^2 e^{-\delta} m+p^3 e^{-\delta} \sum_{ab\in E}\left( k-d(a,b)-2\right)\\
& = & p^2 e^{-\delta} m \left( 1+ p( k-2-\tfrac1{m}\sum_{ab \in E}
        d(a,b)) \right).\\
& \ge & p^2 e^{-\delta} m \left(1+\delta(1-\alpha-\tfrac2k)\right),
\end{eqnarray*}

since $\sum_{ab \in E} d(a,b)\le \alpha k m$.

\medskip

\begin{figure}[htbp]
\begin{center}
\includegraphics[scale=1.2]{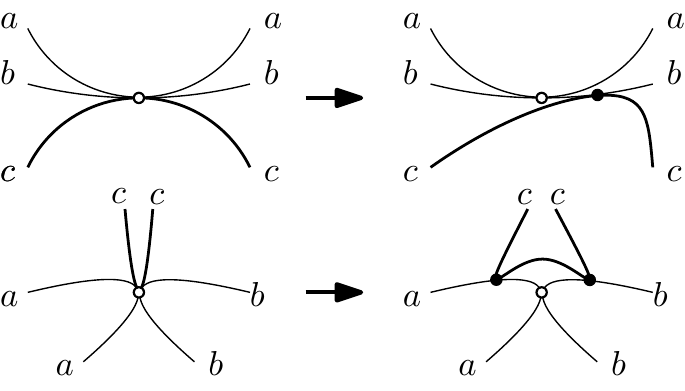}
\caption{The point $x(a,b)$ is depicted by a white dot, and the newly
  created points are depicted by black dots. \label{fig:mod}}
\end{center}
\end{figure}

Let $\cF''$ be obtained from
$\cF'$ by slightly modifying the Jordan curves around each
intersection point $x$ as follows. If $x=x(a,b)$, for some good edge
$ab$, then we do the following. Note that by (1) and (2),
$a,b\in \cF'$ and $x=x(a,b)$ is contained in at most one Jordan curve
of $\cF'$
distinct from $a$ and $b$. Assume that such a Jordan curve exists,
and call it $c$. By (3), $a$ and $b$ are at distance 0 in $\cF'$. We then slightly modify $c$ in a small disk centered
in $x(a,b)$ so that for any $d\in \{a,b\}$, if $c$ and $d$ are at distance 0
in $\cF'$, then they remain at distance 0 in $\cF''$. Moreover, the point $x(a,b)$ and the newly
created points are 2-touching in $\cF''$ (see
Figure~\ref{fig:mod}). If $ac$ (resp. $bc$) is
also a good edge with $x(a,c)=x$ (resp. $x(b,c)=x$), then note that
the conclusion above also holds with $a,b$ replaced by  $a,c$
(resp. $b,c$). 
Now, for any other intersection point $y$ of Jordan curves of $\cF'$, that is not equal to $x(a,b)$ for some good edge $ab$, we make the Jordan curves disjoint at $y$.
 It follows from the definition of a good edge that the family $\cF''$ obtained from $\cF'$ after these modifications is 2-touching, and for any good edge $ab$, $a$ and $b$ are at distance 0 in $\cF''$. Note that $G(\cF'')$ is planar, since $\cF''$ is 2-touching, and its expected number of edges is $\sum_{ab \in E} P_{ab}$. Since the number of edges of a planar graph is less than three times its number of vertices, we obtain:

\begin{eqnarray*}
3pn > \sum_{ab \in E} P_{ab}\ge p^2 e^{-\delta} m \left(1+\delta(1-\alpha-\tfrac2k)\right).
\end{eqnarray*}

As a consequence, 

$$m<\frac{3e^\delta \,k}{ \delta+\delta^2\left(1-\alpha-\tfrac2k\right)}\,n,$$

as desired. This concludes the proof of Theorem~\ref{th:ad}.\hfill
$\Box$

\section{Proof of Theorem~\ref{th:avd2}}\label{sec:avdist}

The following is an easy variation of the main result of Fox and
Pach~\cite{FP10}. Consider three Jordan curves $a,b,c$ such that $a$ is
outside the region bounded by $c$, $b$ is inside the region bounded by
$c$, and $a$ intersects $b$. Then we say that the pair $a,b$ is
\emph{$c$-crossing}.

\begin{lemma}\label{lem:fpb}
Let $c$ be a Jordan curve, and let $\cF$ be a family of $n$
Jordan curves such that $\cF\cup \{c\}$ is $k$-touching and all the
elements of $\cF$ intersect $c$. Then the number of $c$-crossing pairs
in $\cF$ is at most $2ekn$.
\end{lemma}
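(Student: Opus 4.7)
The plan is to adapt the probabilistic argument of Fox and Pach~\cite{FP10} to the bipartite setting imposed by $c$. Write $\cF=A\cup B$, where $A$ (resp.\ $B$) consists of the curves of $\cF$ lying in the closure of the exterior (resp.\ interior) of the region bounded by $c$; this covers $\cF$ since no curve of $\cF$ crosses $c$. If $a\in A$ and $b\in B$ intersect, then $a\cap b\subseteq c$, so every $c$-crossing pair $(a,b)$ has an intersection point on $c$. Fix one such point $p(a,b)$ for each $c$-crossing pair, and let $N$ denote the number of $c$-crossing pairs.

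Sample $\cF'\subseteq\cF$ by including each element independently with probability $q=1/k$. For each $c$-crossing pair $(a,b)$, let $E_{ab}$ be the event that $a,b\in\cF'$ and that no other curve of $\cF$ passing through $p(a,b)$ lies in $\cF'$. Since $\cF\cup\{c\}$ is $k$-touching, at most $k-1$ curves of $\cF$ meet $p(a,b)$, hence $\Pr[E_{ab}]\ge q^2(1-q)^{k-3}$. Let $H$ be the (simple, bipartite) graph on vertex set $\cF'$ with one edge $ab$ per $c$-crossing pair for which $E_{ab}$ occurs.

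The crux is showing that $H$ is planar. For each $f\in\cF'$, pick a point $\hat f$ in the interior of $f$'s bounded region, with the proviso that if $f\in A$ happens to surround $c$, then $\hat f$ is chosen in the annular component disjoint from the region bounded by $c$. For each edge $ab$ of $H$, draw a curve from $\hat a$ to $p(a,b)$ in the part of $a$'s bounded region lying on the exterior side of $c$, concatenated with a curve from $p(a,b)$ to $\hat b$ inside $b$'s bounded region (which is entirely contained in $c$'s closed region). Inside each fixed Jordan region, the curves for different edges can be routed as a non-crossing fan rooted at $\hat f$. Since the bounded regions of the elements of $\cF'$ have pairwise disjoint interiors, two drawn edges can only meet on $c$; this would force $p(a,b)=p(a',b')$, but then $E_{ab}$ and $E_{a'b'}$ both forbid the other pair's curves at that point and cannot co-occur. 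Hence $H$ has a plane embedding, and bipartite planarity gives $|E(H)|\le 2|V(H)|\le 2|\cF'|$.

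Taking expectations yields $N\,q^2(1-q)^{k-3}\le 2qn$, i.e.\ $N\le 2n/\bigl(q(1-q)^{k-3}\bigr)$. Applying Lemma~\ref{lem:expc} with $\delta=1$ gives $(1-1/k)^{k-3}\ge(1-1/k)^{k-2}\ge e^{-1}$, and so with $q=1/k$ we conclude $N\le 2ekn$, as desired. The main obstacle is the planarity of $H$: one must combine the pairwise disjointness of the interiors of the Jordan regions with the mutual exclusion of events $E_{ab}$ sharing a common point on $c$ in order to rule out every potential crossing in the drawing.
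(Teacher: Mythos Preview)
Your approach is essentially the same as the paper's: random sampling with probability $1/k$, counting good $c$-crossing pairs, and applying the edge bound $|E|\le 2|V|$ for planar bipartite graphs. The paper even uses the same probability estimate $q^2(1-q)^{k-3}$.

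There is, however, a genuine gap in your planarity argument. The claim that ``the bounded regions of the elements of $\cF'$ have pairwise disjoint interiors'' is false: since the curves of $\cF$ are merely pairwise non-crossing, two curves in $A$ (or in $B$) can be nested. For instance, take $c$ the unit circle and $a_1,a_2\in A$ two circles tangent to $c$ at the same point with $a_1$ inside the region bounded by $a_2$; nothing forbids both from landing in $\cF'$. Your drawing then routes the half-edge for $a_1$ entirely inside $a_2$'s region, where it may well cross the half-edge for $a_2$. One can repair this: if $a_1$ is nested inside a non-surrounding $a_2$ and $a_1$ has an edge in $H$, then $a_2$ must pass through $p(a_1,b)$, so $E_{a_1 b}$ forces $a_2\notin\cF'$; hence among non-isolated vertices of $H$ on one side the regions really are unnested. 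But this extra argument is needed (and the case of several curves surrounding $c$ still requires care), so as written the step does not go through.

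The paper sidesteps this issue entirely: rather than constructing an explicit drawing, it perturbs the sampled family near every intersection point that is not the witness of a good pair, making the curves disjoint there (possible precisely because they are non-crossing). The resulting family $\cF''$ is $2$-touching, and its intersection graph is then planar and bipartite directly, without ever appealing to disjointness of bounded regions.
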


\begin{proof}
Let $m$ be the number of $c$-crossing pairs
in $\cF$. For each $c$-crossing pair $a,b$ in $\cF$, we consider
an arbitrary point $x(a,b)$ in $a\cap b$. We now select each Jordan curve of $\cF$ uniformly at
random with probability $p=\tfrac1{k}$. Let $\cF'$ be the resulting
family. A $c$-crossing pair $a,b$ in $\cF$ is \emph{good} if $\cF'$ contains $a$ and $b$, but does not contain any other
Jordan curve of $\cF$ containing $x(a,b)$. Note that the probability
that a given $c$-crossing pair $a,b$ is good is at least $p^2 (1-p)^{k-3}$,
and therefore the expectation of the number of good $c$-crossing pairs is
at least $p^2 (1-p)^{k-3}m$. For any intersection point $y$ of
Jordan curves of $\cF'$, that is not equal to $x(a,b)$ for some good
$c$-crossing pair $a,b$, we make the Jordan curves disjoint at $y$
(this is possible since the Jordan curves are pairwise
non-crossing). Let $\cF''$ be the obtained family. Observe that $\cF''$ is
2-touching and each intersection point contains one Jordan curve
lying outside the region
bounded by $c$ and one Jordan curve lying inside the region
bounded by $c$. The graph $G(\cF'')$ is therefore planar and
bipartite. The expectation of the number of vertices of $G(\cF'')$ is
$pn$ and the expectation of the number of edges of $G(\cF'')$ is at
least $p^2 (1-p)^{k-3}m$. Since any planar bipartite graph on $N$
vertices contains at most $2N$ edges, it follows that $p^2(1-
p)^{k-3}m < 2pn$. Since $(1-\tfrac1{k})^{k-3}>e^{-1}$, we obtain
that $m<2ekn$, as desired.
\end{proof}

Some planar quadrangulations can be represented as 2-touching families
of Jordan curves intersecting a given Jordan curve $c$ (so that each
edge of the quadrangulation corresponds to a $c$-crossing pair of
Jordan curves). Therefore, the bound $2N$ cannot be decreased (by more
than an additive constant) in the
proof of Lemma~\ref{lem:fpb}. Furthermore, the possibly near-extremal example in Figure \ref{fig:counterexample} shows that the bound $2ekn$ in Lemma~\ref{lem:fpb} cannot be improved to less than $(2k-4)n$.

\begin{figure}[htbp]
\begin{center}
\includegraphics[scale=0.3]{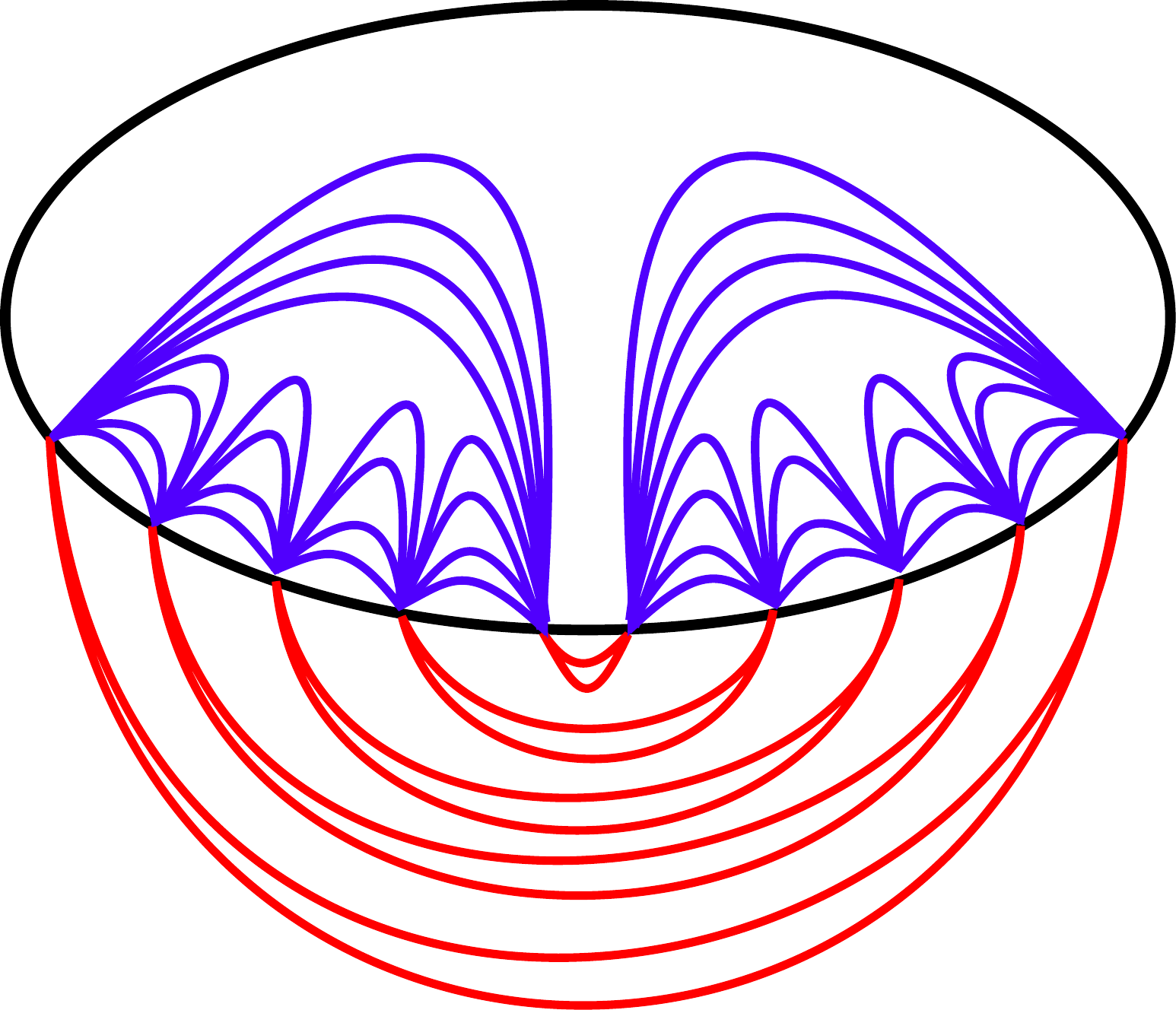}
\caption{A family $\cF$ of $n$ Jordan curves that all intersect a
  fixed Jordan curve $c$, such that $\cF \cup \left\{c\right\}$ is
  $k-$touching. Each Jordan curve in the interior of $c$ touches each
  Jordan curve in the exterior of $c$. In the interior, there are
  only two sets of $k-2$ concentric Jordan curves. The remaining
  $n-2k+4$ Jordan curves are in the exterior of $c$. As $n$ goes to infinity, the number of $c-$crossing pairs divided by $n$ converges to $2k-4$. \label{fig:counterexample}}
\end{center}
\end{figure}

\smallskip

We are now ready to prove Theorem~\ref{th:avd2}.

\medskip

\noindent \emph{Proof of Theorem~\ref{th:avd2}.}
Let $E$ denote the edge-set of $G(\cF)$ and let $m=|E|$. Let $\alpha=\tfrac1{km}\sum_{ab\in
  E} d(a,b)$. Note that the average distance in $\cF$ is $\alpha k$. 

Fix some $0<\epsilon<1$, and set $\delta=\tfrac12(1-\epsilon)$. For any edge
$ab\in E$ with $d(a,b)>0$, we do the following. Note that there is
a unique ordering $c_1,\ldots,c_d$ of the elements of $\cD(a,b)$,
such that for any $1\le i \le d$, the distance between $a$ and $c_i$
is $i-1$. Then the edge $ab$ gives a charge of 1 to each of the
elements $c_{\lceil \delta d\rceil},c_{\lceil \delta d\rceil+1},\ldots,c_{\lfloor (1-\delta)d\rfloor+1}$. Let $T$ be the total
charge given during this process. Note that

\begin{eqnarray*}
 T& = & \sum_{ab\in
  E}(\lfloor (1-\delta)d(a,b)\rfloor+1-\lceil \delta d(a,b)\rceil+1)\\
& \ge & \sum_{ab\in
  E} (1-2\delta) d(a,b)=\epsilon\sum_{ab\in
  E} d(a,b).
\end{eqnarray*}

We now analyze how much charge was received by an arbitrary
Jordan curve $c$. Let $N(c)$ denote the neighborhood of $c$, and let
$N^+(c)$ (resp. $N^-(c)$)
denote the set of neighbors of $c$ lying outside (resp. inside) the region bounded by
$c$. Observe that if $c$ received a charge of 1 from some edge $ab$,
then without loss of generality we have $a\in N^+(c)$, $b\in N^-(c)$,
and both $a$ and $b$ are at distance at most $\max(\lfloor
(1-\delta)d(a,b)\rfloor,d(a,b)-\lceil \delta d(a,b)\rceil)\le
(1-\delta) d(a,b)\le (1-\delta)k$ from $c$.
Let $N_{1-\delta}(c)$ denote the set of neighbors of $c$ that are at
distance at most $(1-\delta)k$ from $c$. Then the charge received by
$c$ is at most the number of $c$-crossing pairs $a,b$ in the subfamily of $\cF$ induced
by $N_{1-\delta}(c)$, which is at most $2e k|N_{1-\delta}(c)|$ by Lemma~\ref{lem:fpb}.

For any $\gamma$, let $m_{\gamma}$ denote the number of edges $ab\in E$ such that $a$ and
$b$ are at distance at most  $\gamma k$. It follows from the
analysis above that $T\le 4ekm_{1-\delta}$. Therefore, $\sum_{ab\in
  E} d(a,b)\le \tfrac{4e}{\epsilon}km_{1-\delta} $. Since $\sum_{ab\in
  E} d(a,b)=\alpha k m$, we have $m_{(1+\epsilon)/2}=m_{1-\delta}\ge
\tfrac{\epsilon \alpha}{4e} \,m$. 

\smallskip

We now
study the contribution of an arbitrary edge $ab$ to the sum $\sum_{ab\in
  E} d(a,b)=\alpha k m$. Let $t$ be some integer. If $d(a,b)\le \tfrac{t+1}{2t} k$, then
$ab$ contributes at most $\tfrac{t+1}{2t} k$ to $\alpha k m$,
and therefore at most $\tfrac{t+1}{2t} $ to $\alpha  m$. Note
that there are $m_{(t+1)/2t}$ such edges $ab$. For
each $2\le i \le t-1$, each edge $ab$ such that
$\tfrac{t+i-1}{2t} k< d(a,b)\le \tfrac{t+i}{2t} k$
contributes at most $\tfrac{t+i}{2t} $ to $\alpha  m$, and there
are $m_{(t+i)/2t}-m_{(t+i-1)/2t}$ such
edges. Finally, each edge $ab$ with $d(a,b)> \tfrac{2t-1}{2t} k$
contributes at most 1 to $\alpha  m$, and there are
$m-m_{(2t-1)/2t}$ such edges. As a consequence, 

\begin{eqnarray*}
\alpha m & \le & \tfrac{t+1}{2t} m_{(t+1)/2t}+\sum_{i=2}^{t-1} \left(
                 \tfrac{t+i}{2t} (m_{(t+i)/2t}-m_{(t+i-1)/2t}
                 )\right)+ m-m_{(2t-1)/2t}\\
& = & \sum_{i=1}^{t-1} \left(m_{(t+i)/2t} (\tfrac{t+i}{2t}-\tfrac{t+i+1}{2t})
                 \right)+m\\
& = & m-\tfrac1{2t} \sum_{i=1}^{t-1} m_{(t+i)/2t}\\
& \le & m - \tfrac1{2t} \sum_{i=1}^{t-1}\tfrac{i}{t} \tfrac{\alpha}{4e} \,m ,
\end{eqnarray*}

since $m_{(1+\epsilon)/2}\ge
\tfrac{\epsilon \alpha}{4e} \,m$ for every $0<\epsilon<1$. As a
consequence, we obtain that $\alpha \le 1-\tfrac{t-1}t
\tfrac{\alpha}{16e}$. Since this holds for any integer $t$, we have
$\alpha \le 1-\tfrac{\alpha}{16e}$ and therefore $\alpha\le
1/(1+\tfrac1{16e})$, as desired.\hfill $\Box$

\section{Remarks and open questions}\label{sec:ccl}

Most of the proof of Theorem~\ref{th:k1} proceeds by finding a
Jordan region intersecting at most $k$ other Jordan regions (see
Claim~\ref{cl:k}). On a single occasion, we use a different reduction (via
a list-coloring argument). A natural question is: could this be
avoided? Is it true that in any simple $k$-touching family of
Jordan regions, if $k$ is large enough, then there is a
Jordan region which intersects at most $k$ other Jordan regions? It turns
out to be wrong, as depicted in Figure~\ref{fig:hands}. However, a proof along the
lines of that of Theorem~\ref{th:k1} (but significantly simpler),
shows that if $k$ is large enough, then there is a
Jordan region which intersects at most $k+1$ other Jordan
regions. It was pointed out to us by Patrice
Ossona de Mendez (after the original version of this manuscript was submitted) that he also obtained this result in 1999
(see~\cite{OM99}). His result and its proof are stated with a completely different
terminology, but the ideas are essentially the same. In particular,
his result also implies (relatives of) our Corollaries~\ref{cor:1} and~\ref{cor:2}.

\begin{figure}[htbp]
\begin{center}
\includegraphics[scale=1.2]{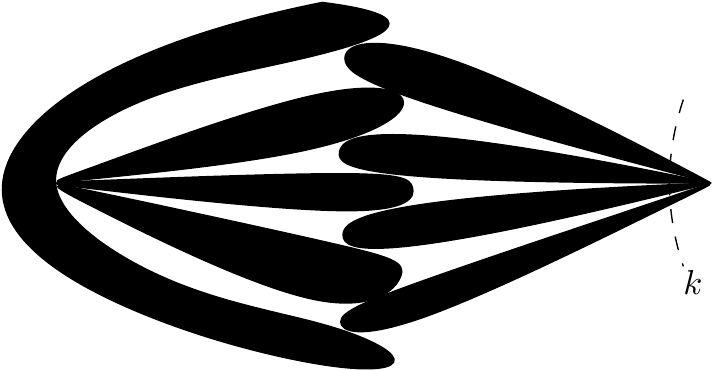}
\caption{Every Jordan region intersects precisely $k+1$ other Jordan regions. \label{fig:hands}}
\end{center}
\end{figure}

This can be used to obtain a result on the chromatic number of \emph{simple}
families of $k$-touching Jordan curves (families of $k$-touching
Jordan curves such that any two Jordan curves intersect in at most
one point). Using the result mentioned above (that if $k$ is
sufficiently large and the interiors are pairwise disjoint, then there
is a Jordan region that intersects at most $k+1$ other Jordan regions), it
is not difficult to show that the chromatic number of any simple
family of $k$-touching Jordan curves is at most $2k$ plus a constant. We believe that the answer should be much smaller.

\begin{prob}\label{pr:31}
Is it true that for some
constant $c$, any simple family of $k$-touching Jordan curves can
be colored with at most $k+c$ colors?
\end{prob}

It was conjectured in~\cite{EGL16} that if $\cS$ is a family of pairwise
non-crossing strings such that (i) any two strings intersect in at
most one point and (ii) any point of the plane is on at most $k$
strings, then $\cS$ is $(k+c)$-colorable, for some constant $c$. Note
that, if true, this conjecture would give a positive answer to
Problem~\ref{pr:31}.

\end{document}